\documentclass[12pt]{article}

\setlength{\oddsidemargin}{-0.0in} \setlength{\textwidth}{6.5in}
 \setlength{\topmargin}{-0.0in} \setlength{\textheight}{8.4in}
\usepackage{amsthm,amsmath,amsmath,amssymb,amsthm,amsfonts,amstext,amsbsy,amscd}
\RequirePackage[colorlinks=true,citecolor=blue,urlcolor=blue]{hyperref}
\usepackage[T1]{fontenc}
\usepackage[utf8]{inputenc}

\RequirePackage{hypernat}
\usepackage{graphicx}
\newenvironment{disarray}{\everymath{\displaystyle\everymath{}}\array} {\endarray}
\usepackage{pdfsync}
\usepackage{mathrsfs, amscd, amsfonts}




\newcommand{\field}[1]{\mathbb{#1}}
\newcommand{\R}{\field{R}}

\newcommand{\PP}{\field{P}}



\theoremstyle{plain}
\newtheorem{theorem}{Theorem} 
\newtheorem{definition}{Definition}

\newtheorem{lemma}{Lemma}
\newtheorem{prop}{Proposition}
\newtheorem{assumption}{Assumption}{\bf}{\rm}
\theoremstyle{remark}

{\smallskip}%


\newcommand{\E}{\field{E}}
\renewcommand{\P}{\field{P}}

\newcommand{\pl}{\mathcal{P}_{\lambda}}

\newcommand{\nul}{\nu_{\lambda}}
\newcommand{\vl}{\varphi_{\lambda}}

\newcommand\nbOne{{\mathchoice {\rm 1\mskip-4mu l} {\rm 1\mskip-4mu l}
{\rm 1\mskip-4.5mu l} {\rm 1\mskip-5mu l}}}


\begin{document}



\title{Statistical estimation of  jump rates for a specific class of Piecewise Deterministic Markov Processes.}

\author{ N. Krell\thanks{Universit\'e de Rennes 1,
 Institut de Recherche math\'ematique de Rennes,
CNRS-UMR 6625, Campus de Beaulieu.
 B\^atiment 22, 35042 Rennes Cedex, France. {\tt email}: nathalie.krell@univ-rennes1.fr}
}

\maketitle
\begin{abstract}
 We consider the class of   Piecewise Deterministic Markov Processes (PDMP), whose state space is $\R_{+}^{*}$, that possess an increasing deterministic motion and that shrink deterministically when they jump. Well known examples for this class of processes are Transmission Control Protocol (TCP) window
size process   and the processes modeling the size of a "marked" {\it Escherichia coli} cell. Having observed the PDMP until its $n$th jump, we construct a nonparametric estimator of the jump rate $\lambda$. Our main result is that for  $D$ a compact subset of $\R_{+}^{*}$, if  $\lambda$  is in the H{\"{o}}lder space ${\mathcal H}^s({\mathcal D})$, the squared-loss error of the estimator is asymptotically close to the rate of $n^{-s/(2s+1)}$. Simulations    illustrate
the behavior of our estimator.
\end{abstract}

{\small 
\noindent {\it Keywords:} Piecewise Deterministic
Markov processes, Nonparametric estimation, jump rate   estimation, ergodicity of Markov chains.\\
\noindent {\it Mathematical Subject Classification:} 62M05, 62G05, 62G20, 60J25.}

\section{Introduction}

The Piecewise deterministic Markov processes were first introduced in the literature by Davis (\cite{Dav84} and
\cite{Dav93}), they
 form a family of càdlàg Markov processes involving a deterministic
motion punctuated by random jumps. We refer to the paper \cite{kre} and its references for an overview of PDMPs. Let us detail the special case of PDMPs that
will be considered in this paper. The motion of the PDMP $(X(t))_{t\geq 0}$
depends on three local characteristics, namely the jump rate $\lambda$, the flow $\phi$
 and a deterministic increasing function $f$ which governs the location of the process at the jump time (in the general case it depends on a Markov kernel $Q$). The process starts from $x$ and follows the flow $\phi(x, t)$
until the first jump time $T_{1}$
 which occurs spontaneously in a Poisson-like fashion with rate $\lambda(\phi (x,t))$. The location of the process at the jump time $T_{1}$, denoted by $Z_1=X(T_{1})$, is equal to $f(X(T_{1}^-))$
, with $f$ a function such that  $0\leq f(y)\leq\kappa y$ with $0<\kappa< 1$. The motion restarts from this new point $Z_1$ as before. This fully describes a piecewise continuous trajectory for $\{X(t)\}$
with jump times $\{T_{k}\}$
and post jump locations $\{Z_{k}\}$, and which evolves according to the flow $\phi$
 between two jumps.

This paper analyzes a special case of both  Piecewise Deterministic Markov process (PDMPs) and growth-fragmentation model.  

The fisrt  known example for this class of processes is the  TCP window
size process (see  \cite{BaCh}, \cite{ChMaPa}, \cite{DGR02}, \cite{GrKa} and
 \cite{GRZ04}). 
The TCP protocol is one of the main data transmission protocols of the Internet. It has been designed to adapt to the various traffic conditions of
the actual network. For a connection, the maximum number of packets that can be sent at each round is given by a variable
$W$, called the
congestion window size. If all the
$W$
packets are successfully
transmitted, then
$W$
is increased by 1, otherwise it is multiplied by
$\delta\in(0,1)$ (detection of
a congestion). As shown in \cite{DGR02} a correct scaling of this process leads to a continuous
time Markov process, called general TCP window size process.

The second example is  the processes modeling the size of a "marked" {\it Escherichia coli} cell   (see \cite{hof} and \cite{LaPe}). If by following the evolution of {\it Escherichia coli} bacterium, we chose at random a bacteria, and follow his growth, and at his division, we choose randomly,  and independently of the process, to follow one of his    daughter  and on so one. We call the bacteria follow at each time, the "marked" bacteria. The size of the "marked" bacteria is a PDMP. More precisely between the jumps, the bacteria grows exponentially with a growth rate which we will refer to later as the  instantaneous growth rate. 
The division rate of such process is denoted by $B(\cdot)$
 and at each jump its size is divided by two. 
 
  In both cases the value of the process is divided in a deterministically way at each jump.

  The purpose of this article is to perform non-parametric estimation of the jump rate based on a single observation  of the process over a long time interval. 
  
  The assumptions made in this paper ensure that the Markov process which gives the size after each jump of the PDMP is ergodic. The ergodic theorem  was already known for some  cases of PDMPs  and for some one-dimensional jump-diffusions.  In  \cite{Cloez}  Cloez studies a process which is more general than the one  considered in this paper. More precisely, his process evolves like a diffusion which satisfies a stochastic differential equation between the jumps, but he requires that $\lambda$ be bounded  below. This paper does not make  such assumption. In addition, the well known ergodicity properties of  TCP, due to J.-M. Bardet et al. \cite{BaCh}, provide quantitative estimates for the exponential convergence to equilibrium, in terms of the total variation and Wasserstein distances. However, such results cannot be used in the present framework as we need a uniform upper bound for the speed of convergence to  the invariant measure  over a certain class of functions in order to prove the statistical result.

Our approach is based on the methods used by Doumic et al.
\cite{hof}, which were
applied to analyse a  special case of PDMPs dealing with  “marked” bacteria size evolution.  The analytical results of this paper can be generalized to more general PDMPs, for example to TCP.  In \cite{hof} the authors
 do not merely study the "marked" bacterium which is selected at each division uniformly and independently, but  the evolution of all bacteria involved. A dependence structure results when, for example, a bacteria divides and gives birth to two new bacterium of equal size. In a general case, the instantaneous growth rate (constant for a given bacteria) would depend on the bacteria itself. Consequently, the size of the bacterium is no longer a Markov process. However, on the other hand,  the size of the process and the instantaneous growth rate, together form  a Markov process. The present paper contains the case of a "marked" bacterium, in its simple case, where the  instantaneous growth rate is the same for all bacteria.

This article has two main features that may be useful for future studies. First, it can be used as a tool to verify the PDMP jump rate proposed in the existing literature, for example $\lambda (x)=x$ in the TCP case. Secondly, it  can suggest an estimator for some special  cases, such as for {\it
 Escherichia coli} bacteria, where the jump rate is not known.

In  \cite{Azaduf2} Aza{\"{i}}s  et al. give an estimator of the conditional distribution of the inter-jump times for a PDMP, which is uniformly consistent when only one observation of the process within a long time is available. They deal with PDMPs which jump when they hit the boundary (this case is not considered in our paper). Their method relies on a generalization of Aalen’s multiplicative intensity model \cite{AalPhd,Aal77, Aal78}. But they only prove the uniform consistency of their estimator. They also have to  assume that the process $(X(t))_{t\geq 0}$
evolves in a bounded space. Here we do not make this assumption. As a consequence the tools of their paper and of the present one are different. To the best of my knowledge, \cite{Azaduf2} is the only work investigating the nonparametric estimation of the conditional distribution of the inter-arrival times for PDMPs. This paper relies on \cite{Az12b} in which the authors focus on the non parametric estimation of the jump rate and the cumulative rate for a class of non homogeneous marked renewal processes. The case where the post-jump locations of the PDMP do not depend on inter-arrival times was considered in \cite{Az12b}.

We refer to  \cite{Azaduf2} for an overview of the statistical methods related to this kind of process, as well as to \cite{Aal77,Aal78,AalPhd}
 for statistical inference related to the multiplicative intensity model. The book of Andersen et al. \cite{andbog} gives a comprehensive account of estimation for jump rates which depend both on time and spatial variable.

As far as I know, the only other paper dealing with general PDMP is the work of Aza{\"{i}}s \cite{Az}, where the author focuses on a non parametric recursive estimator for the transition kernel of the PDMP.

Other works dealing with  specific cases of PDMP can been seen as ruin probability, for example, as found in the references of \cite{asm}. In addition, the PDMP modeling the quantity of a given food contaminant in the body has been studied in  \cite{bou,ber1, ber2}, assuming that the inter-intake times
are i.i.d.. In this paper we do not make this assumption.

The paper of Aza{\"{\i}}s  and Genadot \cite{AzGe}
 consider a growth-fragmentation model where $\lambda$ is constant and the Markov kernel $Q$ is absolutely continuous with respect to the Lebesgue measure. The case considered is totally different from the present one.

The paper is organized as follows. In section \ref{2}
 we introduce the class of PDMPs, that will be studied, and we give an explicit construction of the PDMP. Section \ref{3}
concerns the statistical estimation of the jump rate. We first define the observation scheme and the class of functions for the 3 parameters of the PDMP concerned, and the assumptions used (in Subsection \ref{upper bound}). In subsection \ref{2.2}, some ergodicity results are stated uniformly over the class of functions previously  defined. We explicitly construct an estimator  $\widehat \lambda_n$ of $\lambda$. In subsection \ref{3.4}
 an upper bound for the squared-error loss is given in the main Theorem \ref{upper bound2}. In \ref{simulation}, we illustrate our result with simulations of a TCP process which could not been seen as a "marked" bacteria process, amongst others. 

Finally, section \ref{4} is reserved for the proofs. In subsection \ref{4.1} we prove  the ergodicity result of Subsection \ref{2.2}. Subsection \ref{rate of convergence for the empirical measure}
 presents the intermediate results needed in  \ref{4.3}
 to prove the major results given in Subsection \ref{3.4}.

\section{PDMP}\label{2}

In general a PDMP is defined by its local characteristics, namely, the jump rate $\lambda$, the flow $\phi$ 
 and the transition measure $Q$ according to which the location of the process is chosen at the jump time. In this article, we consider  a specific class of PDMP which includes the control of congestion TCP/IP used in communication networks (V. Dumas and al \cite{DGR02}, V. Guillemin and al. \cite{GRZ04}), for which the transition measure $Q$ is a Dirac mass function, which means that when the process jumps, the size after the jump is a deterministic function of its size before. More precisely, 

 \begin{assumption}\label{hypopdmp}
\begin{itemize}

\item The flow $\phi:\R^+ \times\R^+ \rightarrow\R^+$ is a one-parameter group of homeomorphisms:
$\phi$ is $\mathcal{C}^{1}$, $\phi (., t)$ is an homeomorphism for each $t\in\R^+$, satisfying the semigroup property: $\phi(., t+
s) = \phi (\phi(., s), t) $ and $\phi_x (.):=\phi (x, .)$ is an $\mathcal{C}^{1}$-diffeormorphism.

\item The jump rate $\lambda :\R^+ \rightarrow \R_{+}$ is assumed to be a measurable function
satisfying
\[\forall x\in \R^+ ,\;\; \exists\:\varepsilon>0\:\; \text{such that}\:\; \int_{0}^{\epsilon}\lambda(\phi (x,s))ds<\infty .\]

\item $f:\R^+\rightarrow \R^+$ is an increasing $\mathcal{C}^{1}$-diffeomorphism  and $Q(u,\{y\})=\nbOne_{\{f(u)=y\}}$. 

\end{itemize}
\end{assumption}

Given these three characteristics, it can be shown (\cite{Dav93}, pages 62-66), that there exists
a filtered probability space  $(\Omega , \mathcal{F},\{\mathcal{F}_{t}\}, \{\mathbb{P}_{x}\})$ such that the motion of the
process $\{X(t)\}$ starting from a point $x\in \R^+$ may be constructed as follows.
Consider a random variable $T_{1}$ such that
\begin{equation}\mathbb{P}_{x}(T_{1}>t)=
   e^{-\Lambda (x,t)}  ,
\label{t1}\end{equation}
where for $x\in \R^+$ and $t \in \R^+$
\[
\Lambda (x,t)=\int_{0}^{t}\lambda (\phi (x,s))ds.
\]
 If $T_{1}$ is equal to infinity, then the process $X$ follows the flow, i.e. for
$t\in \R_{+}$, $X(t) = \phi (x, t)$. Otherwise let \begin{equation}Z_{1}=f(\phi (x,T_{1})).\label{Z1}\end{equation} The trajectory of $\{X(t)\}$ starting at
$x$, for $t \in [0,T_{1}]$, is given by
\[
X(t)=
\begin{cases}
\phi (x,t) &\text{for } t<T_{1},\\
Z_{1}      &\text{for } t= T_{1}.
\end{cases}
\]
Inductively starting from $X(T_{n}) = Z_{n}$, we now select the next inter-jump time $T_{n+1}- T_{n}$
and post-jump location $X(T_{n+1}) = Z_{n+1}$ in a similar way.

This construction properly defines a strong Markov process $\{X(t)\}$   with jump times $\{T_{k}\}_{k\in\mathbb{N}}$
(where $T_{0} = 0$).  A very natural Markov chain is linked to $\{X(t)\}$, namely the jump chain
$(Z_{n})_{n\in\mathbb{N}}$.

$\{X(t)\}$ is a Markov process with infinitesimal generator $G$:
\begin{equation}\label{generator}
Gh(y)=\phi_{x}^{'} (y)h^{'}(y)+\lambda (y)\big(h(f(y))-h(y)\big) 
\end{equation}
for $h: \R^+\rightarrow \R$ a bounded measurable functional.

Thanks to (\ref{t1}), we get that
\[\P (T_{1}\in dt | Z_{0}=x)=\lambda (\phi_{x }(t))e^{-\int_{0}^{t}\lambda (\phi_{x}(s))ds}dt.\]
Using (\ref{Z1}), the monotonicity of $f\circ \phi_x$ and  a simple change of variables, we get the transition probability  of the Markov chain $(Z_{n})_{n\in\mathbb{N}}$:

\begin{equation}\P (Z_{1}\in dy | Z_{0}=x)=\lambda (f^{-1}(y))e^{-\int_{f(x)}^{y}\lambda (f^{-1}(s))g_{x}(s)ds}g_{x}(y)\nbOne_{\{y\geq f(x)\}}dy,\label{densitet1}\end{equation}
where \[g_{x}(y)=\left[(f\circ \phi_{x})^{'}\left((f\circ \phi_{x})^{-1}(y)\right)\right]^{-1}\] and $\phi_x (.):=\phi (x,.)$.

\section{Statistical estimation of the jump rate}\label{3}

\subsection{The observation scheme}\label{3.1}
Statistical inference is based on the observation scheme:
\[(X(t), t\leq T_n)\]
and asymptotics are considered when the number of jumps  of the process, $n$, goes to infinity.

Actually the  simpler observation scheme:
\[(X(T_i ), 1\leq i\leq n)=(Z_i , 1\leq i\leq n)\] is sufficient.

\subsection{Class of functions} \label{upper bound}
We want to bound from above the squared-loss error of our estimator over compact intervals ${\mathcal D}$ of $\R_{+}^{*}$. We need to specify the local smoothness properties of $\lambda$ over ${\mathcal D}$, together with general properties that ensure that  the empirical measurements of the PDMP converge toward  the invariant probability with an appropriate speed of convergence. So we have to impose technical assumptions on $\lambda$ in particular near the origin and infinity.

\begin{definition}  For $b>0$, a vector of positive constants 
$\mathfrak{c}=(r, \kappa, l, L, a)$, and two positive functions $m:[0,\infty)\rightarrow (0,\infty)$ and $M:[0,\infty)\rightarrow (0,\infty)$ such that for all $x\in[0,\infty):$ $M(x)\geq m(x)>0$, we
introduce the class ${\mathcal F}(\mathfrak{c}, m, M)$  
of triples of continuous functions
$\lambda:[0,\infty)\rightarrow [0,\infty)$, $f:\R^+\rightarrow \R^+$  and  $\phi:\R^+ \times\R^+ \rightarrow\R^+$ such that 
\begin{equation}\int_{0}^{f(r)}M(s)\lambda (f^{-1}(s))ds \leq L,\label{lambdaL}\end{equation}

\begin{equation}\int_{f(r)}^{\infty}m(s)\lambda (f^{-1}(s))ds =\infty ,\label{lambdainf}\end{equation}

\begin{equation}\int_{f(r)}^{r}M(s)\lambda (f^{-1}(s))ds \geq l,\label{lambdal}\end{equation}

\begin{equation}\lambda (x)\geq \frac{a (f(x))^{b}}{m(f(x))}\:\; \forall x\geq r,\label{lambdapoly}\end{equation}

 \begin{equation}\forall x>0, \:\;\: 0<f(x)\leq \kappa x, \label{fk} \end{equation}
\begin{equation} \forall y>0, \:\;\:\forall x\geq 0, \:\;\:  m(y)\leq g_{x}(y)\leq M(y),\label{gmM}\end{equation} where \[g_{x}(y)=\left[(f\circ \phi_{x})^{'}\left((f\circ \phi_{x})^{-1}(y)\right)\right]^{-1}\] and $\phi_{x}(\cdot) =\phi (x,\cdot)$.

\end{definition}

 We notice that $f(0)=0$.

Typically an interesting case would be $f(x)=\kappa x$ with $\kappa\in (0 ,1)$ and then \eqref{gmM} would simply be 
\[M^{-1}(y)\leq \kappa \phi_{x}^{'}( \phi_{x}^{-1}(y/\kappa))\leq m^{-1}(y).\] This seems  quite 	reasonable because, in view of the definition of the infinitesimal generator of the PDMP defined in \eqref{generator},  we would like $\phi_{x}^{'} (\cdot )$ not to be identically zero. 

Also the cases where $\phi_x (t)= xe^{\alpha t}$ or $\phi_x (t)= x+t$ satisfy \eqref{gmM} over ${\mathcal D}$.

 Define
\[
\delta(\mathfrak{c},f):= \frac{1}{1-\kappa^{b+1}} \exp\big(- (1-\kappa^{b+1})\tfrac{am}{b+1} (f(r))^{b+1}\big).
\]

The last assumption that we will need is:
\begin{assumption}\label{contrainte constante}
\begin{equation}\exists b>0  :(\kappa^{b +1}-1)\frac{am}{b +1}(f(r))^{b +1}<\log (1-\kappa^{b+1}),\label{lambdaexis}\end{equation}
\end{assumption}
 so that we have $\delta(\mathfrak{c},f) < 1$

Fix a vector of positive constants $\mathfrak{c}=(r, \kappa, l, L, a)$, a constant $b$ and a function $f$.

\subsection{Geometric ergodicity of the discrete model}\label{2.2}

Let $x\in \R^+$. Introduce the transition kernel 
\[
\pl(x,dx') = \PP\big(Z_n\in dx'\big|\,Z_{n-1}=x\big)
\]
of the size of the process at the $n$th jump time, given the size of the process at the $(n-1)$th jump time.
From \eqref{densitet1}, we infer that $\P (Z_{1}\in dy | Z_{0}=x)$ is equal to
\[
\lambda (f^{-1}(y))e^{-\int_{f(x)}^{y}\lambda (f^{-1}(s))g_{x}(s)ds}g_{x}(y)\nbOne_{\{y\geq f(x)\}}dy.
\]

Thus   we obtain an explicit formula for
\[\pl(x,dy)=\pl (x,y) dy\] with
\begin{equation} 
\mathcal{P}_{\lambda}(x,y)=\lambda (f^{-1}(y))e^{-\int_{f(x)}^{y}\lambda (f^{-1}(s))g_{x}(s)ds}g_{x}(y)\nbOne_{\{y\geq f(x)\}}.  \label{density explicit}
\end{equation}
Denote the left action of positive measures $\mu(dx)$ on $\R^+$ for the transition kernel $\pl$ by
\[\mu\pl(dy)=\int_{\R^+}\mu(dx)\pl(x,dy)\]
 and the right action of a function $\psi$ on $\R$ for the transition $\pl$ by
\[\pl \psi(x)=\int_{\R^+}\psi (y)\pl(x,dy)\]
We now give the geometric ergodic theorem that we will need for the statistical part. We need an uniformity 
 on the class of functions  ${\mathcal F}(\mathfrak{c},m,M)$   defined in subsection \ref{upper bound}.

We introduce the Lyapunov function
\begin{equation} \label{first def Lyapu}
{\mathbb V}(x)=\exp\left(\frac{a}{b+1}(f(x))^{b+1}\right)\;\;\text{for}\;\;x\in \R^+.
\end{equation}
The function ${\mathbb V}$ controls the rate of the geometric ergodicity of the chain with transition $\pl$ and appears in the proof of Proposition \ref{prop transition}.

\begin{prop}\label{prop transition}
Under Assumption \ref{hypopdmp},  for every $ \lambda$ such that $(\lambda,f, \phi )\in {\mathcal F}(\mathfrak{c},m,M)$ there exists a unique invariant probability measure of the form $\nul (dx)=\nul (x)dx$ on $\R^+$. Moreover there exist $0<\gamma <1$, a constant $R$ and a function $\mathbb{V}:\R^+\rightarrow [1,\infty )$ such that 

\begin{equation} \label{contraction}
\sup_{\lambda \in {\mathcal F}(\mathfrak{c},m,M)}\sup_{|\psi| \leq V}\big|\pl^k \psi(x)-\int_{\R^+}\psi(z)\nul (z)dz\big| \leq R{\mathbb V}(x)\gamma^k
\end{equation}
for every $x\in \R^+$, $k\geq 0$,  where the supremum is taken over all functions $\psi:\R^+ \rightarrow \R$ satisfying $|\psi (x)| \leq {\mathbb V}(x)$ for all $x\in \R^+$. 
The function ${\mathbb V}$ is $\nul$-integrable for every $\lambda$ such that $(\lambda,f, \phi )\in {\mathcal F}(\mathfrak{c},m,M)$.

For all $y\in \R^+$ we have the relation:
\begin{equation}\lambda (y)\E_{\nul}(g_{Z_0}(f(y)) \nbOne_{\{f(Z_0 )\leq f(y)\}} \nbOne_{\{Z_1 \geq f(y)\}})=\nul (f(y)).\label{formulel}\end{equation}
\end{prop}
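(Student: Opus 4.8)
The plan is to realise the jump chain $(Z_n)_{n\in\N}$ as a $\mathbb V$‑geometrically ergodic Markov chain through the classical Foster--Lyapunov / small‑set method, keeping track of all constants so that they are uniform over the class $\mathcal F(\mathfrak c,m,M)$, and then to read off \eqref{formulel} from the invariance of $\nul$. Throughout, write $h_x(y)=\int_{f(x)}^{y}\lambda(f^{-1}(s))g_x(s)\,ds$ for $y\ge f(x)$, so that by \eqref{density explicit} one has $\pl(x,y)=h_x'(y)e^{-h_x(y)}\nbOne_{\{y\ge f(x)\}}$; this is a genuine probability density because $h_x(\infty)=\infty$, which follows from \eqref{gmM} together with \eqref{lambdainf} if $x\le r$ and with \eqref{lambdapoly} if $x>r$ (these yield $\lambda(f^{-1}(s))g_x(s)\ge a s^{b}$ for $s\ge f(r)$). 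Integrating the density over $[z,\infty)$ gives the identity $\P_x(Z_1\ge z)=e^{-h_x(z)}$ for $z\ge f(x)$, used repeatedly below.

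\emph{Step 1: uniform drift.} I would establish a Foster--Lyapunov inequality $\pl\mathbb V(x)\le\delta(\mathfrak c,f)\,\mathbb V(x)+K\,\nbOne_{[0,r]}(x)$ in which $\delta(\mathfrak c,f)<1$ and $K$ are independent of $\lambda\in\mathcal F(\mathfrak c,m,M)$. For $x\ge r$, \eqref{lambdapoly} and \eqref{gmM} give a lower bound $h_x(y)\ge c_0\bigl(y^{b+1}-(f(x))^{b+1}\bigr)$ on $[f(x),\infty)$ with $c_0$ depending only on $\mathfrak c$, while \eqref{fk} gives $\mathbb V(y)=\exp\bigl(\tfrac{a}{b+1}(f(y))^{b+1}\bigr)\le\exp\bigl(\tfrac{a\kappa^{b+1}}{b+1}y^{b+1}\bigr)$; combining these yields $\mathbb V(y)\le\mathbb V(x)^{\kappa^{b+1}}e^{\theta h_x(y)}$ for some $\theta\in(0,1)$ (here $\kappa<1$ is what matters), hence
\[
\pl\mathbb V(x)=\int_{f(x)}^{\infty}\mathbb V(y)h_x'(y)e^{-h_x(y)}\,dy\le\mathbb V(x)^{\kappa^{b+1}}\int_{f(x)}^{\infty}h_x'(y)e^{-(1-\theta)h_x(y)}\,dy=\frac{1}{1-\theta}\,\mathbb V(x)^{\kappa^{b+1}}.
\]
Since $\mathbb V$ is non‑decreasing, $\mathbb V(x)^{\kappa^{b+1}-1}\le\mathbb V(r)^{\kappa^{b+1}-1}$ for $x\ge r$, so $\pl\mathbb V(x)\le\delta(\mathfrak c,f)\mathbb V(x)$; Assumption \ref{contrainte constante} is precisely the inequality making $\delta(\mathfrak c,f)<1$. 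For $x<r$ a similar but easier estimate (splitting the integral at $f(r)$ and using \eqref{lambdaL}, \eqref{gmM}, \eqref{fk}) gives $\pl\mathbb V(x)\le K=K(\mathfrak c,f,b)$.

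\emph{Step 2: uniform minorization, and geometric ergodicity.} Next I would show that the compact set $C=[0,r]$ is a small set for $\pl$ with a minorization constant not depending on $\lambda$: for $x\in C$ one localises $Z_1$ in a fixed compact neighbourhood of $f(r)$, on which \eqref{lambdaL} bounds $h_x$ from above uniformly, \eqref{lambdapoly} bounds $\lambda(f^{-1}(\cdot))$ from below, and \eqref{gmM} controls $g_x$; together with the lower bound on $\int M(s)\lambda(f^{-1}(s))\,ds$ supplied by \eqref{lambdal}, this produces a probability measure $\nu$ (possibly depending on $\lambda$) and a $\beta>0$ (not depending on $\lambda$) with $\pl(x,\cdot)\ge\beta\,\nu(\cdot)$ on $C$; strict positivity of $\pl(x,y)$ on $\{y>f(x)\}$ also gives irreducibility and aperiodicity of the chain. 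Feeding Steps 1--2 into a quantitative geometric‑ergodicity theorem of Meyn--Tweedie type, $\pl$ admits a unique invariant probability $\nul$ and the contraction \eqref{contraction} holds, with $\gamma$ and $R$ depending only on $\delta(\mathfrak c,f)$, $K$, $\sup_C\mathbb V=\mathbb V(r)$ and $\beta$, hence uniform over $\mathcal F(\mathfrak c,m,M)$. Invariance together with absolute continuity of $\pl(x,\cdot)$ shows $\nul(dy)=\bigl(\int\nul(dx)\pl(x,y)\bigr)dy$ has a density, and $\nul(\mathbb V)<\infty$ follows either by integrating the drift inequality against $\nul$ (after truncating $\mathbb V$) or by letting $k\to\infty$ in \eqref{contraction} with $\psi=\mathbb V\wedge N$ and then $N\to\infty$.

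\emph{Step 3: the identity \eqref{formulel}.} Writing invariance in density form, $\nul(z)=\int_{\R^+}\nul(x)\pl(x,z)\,dx$ for a.e.\ $z$, and evaluating at $z=f(y)$ (using $f^{-1}(f(y))=y$ and $\{f(y)\ge f(x)\}=\{f(x)\le f(y)\}$) gives
\[
\nul(f(y))=\lambda(y)\int_{\R^+}\nul(x)\,e^{-h_x(f(y))}g_x(f(y))\,\nbOne_{\{f(x)\le f(y)\}}\,dx .
\]
On $\{f(x)\le f(y)\}$ the identity of the first paragraph gives $e^{-h_x(f(y))}=\P_x(Z_1\ge f(y))=\E\bigl(\nbOne_{\{Z_1\ge f(y)\}}\mid Z_0=x\bigr)$, so conditioning on $Z_0$ under $\nul$ shows the right‑hand side equals $\lambda(y)\,\el\bigl(g_{Z_0}(f(y))\,\nbOne_{\{f(Z_0)\le f(y)\}}\,\nbOne_{\{Z_1\ge f(y)\}}\bigr)$, which is \eqref{formulel} (valid for a.e.\ $y$, and for every $y$ by continuity of the densities involved). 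The main obstacle lies in Steps 1--2 \emph{with constants uniform over the whole class}: the drift estimate rests on a delicate balance between the polynomial lower bound \eqref{lambdapoly}, the two‑sided control \eqref{gmM} of $g_x$, the contraction \eqref{fk}, and Assumption \ref{contrainte constante}; and the local minorization near the origin has to be extracted from the merely integrated control of $\lambda$ afforded by \eqref{lambdaL}--\eqref{lambdal}. Making both uniform in $\lambda$ is the technical heart of the argument.
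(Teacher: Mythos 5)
Your proposal is correct and follows essentially the same route as the paper: the same Lyapunov function ${\mathbb V}$ and drift constant $\delta(\mathfrak{c},f)$, a minorization on the small set $(0,r)$ with constant uniform over ${\mathcal F}(\mathfrak{c},m,M)$ and quantitative aperiodicity supplied by \eqref{lambdal}, a quantitative geometric-ergodicity theorem (the paper invokes Baxendale's Theorem 1.1), and the same derivation of \eqref{formulel} from the invariance equation combined with the tail identity $e^{-\int_{f(x)}^{z}\lambda(f^{-1}(s))g_x(s)ds}=\P(Z_1\geq z\,|\,Z_0=x)$. The only cosmetic difference is in the drift step, where you bound ${\mathbb V}(y)\leq {\mathbb V}(x)^{\kappa^{b+1}}e^{\kappa^{b+1}\int_{f(x)}^{y}\lambda(f^{-1}(s))g_x(s)ds}$ directly rather than performing the paper's two integrations by parts and change of variables, yet you land on the same $\delta(\mathfrak{c},f)$.
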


\subsection{Construction of a nonparametric estimator}\label{3.2}
By formula \eqref{formulel}, 
\[
\lambda (y)=\frac{\nul (f(y))}{\E_{\nul}(g_{Z_0}(f(y)) \nbOne_{\{f(Z_0 )\leq f(y)\}} \nbOne_{\{Z_1 \geq f(y)\}})},
\]
provided the denominator is positive. This representation  suggests an estimation procedure, replacing the marginal density $\nul(f(y))$ and the expectation in the denominator by their empirical counterparts. To that end,
pick a kernel  function
$$K: \R\rightarrow [0,\infty),\;\;\int_{\R}K(y)dy=1,$$ 
and set $K_h(y)=h^{-1}K\big(h^{-1}y\big)$ for $y\in \R$ and $h>0$. Our estimator is defined by
\begin{align} 
\widehat \lambda_n (y) = \frac{n^{-1}\sum_{k=1}^{n}  K_{h_{n}}(Z_{k}-f(y))}{n^{-1}\sum_{k=1}^{n}  g_{Z_{k-1}}(f(y))\nbOne_{\{Z_{k}\geq f(y),\,f(y) \geq f(Z_{k-1})\}}\vee \varpi_n }.
\label{def estimator}
\end{align}
where $\varpi_n >0$ is a threshold that ensures that the estimator is well defined in all cases and $x \vee y = \max\{x,y\}$. Thus $(\widehat \lambda_n(y), y\in {\mathcal D})$ is specified by the choice of the kernel  $K$, the bandwidth $h_n >0$ and the threshold $\varpi_n >0$. 
\begin{assumption} \label{prop K} The function $K$ has compact support, and for some integer $n_0 \geq 1$, we have
$\int_{\R}x^kK(x)dx={\bf 1}_{\{k=0\}}\;\;\text{for}\;\;0 \leq k\leq n_0.$
\end{assumption}

\subsection{Rate of convergence}\label{3.4}
We are now ready to state our main result. For $s>0$, with $s=\lfloor s\rfloor + \{s\}$, $0< \{s\} \leq 1$ and $\lfloor s\rfloor$ an integer, introduce the H\"older space ${\mathcal H}^s({\mathcal D})$ of functions $f:{\mathcal D}\rightarrow \R$ possessing a derivative of order $\lfloor s \rfloor$ that satisfies
\begin{equation} \label{def sob}
|f^{\lfloor s \rfloor}(y)-f^{\lfloor s \rfloor}(x)| \leq c(f)|x-y|^{\{s\}}.
\end{equation}
The minimal constant $c(f)$ such that \eqref{def sob} holds defines a semi-norm $|f|_{{\mathcal H}^s(\mathcal{D})}$. We equip the space ${\mathcal H}^s(\mathcal D)$ with the norm 
\[\|f\|_{{\mathcal H}^s(\mathcal D)} = \|f\|_{L^\infty({\mathcal D})} + |f|_{{\mathcal H}^s({\mathcal D})}\] and the associated H\"older balls
\[{\mathcal H}^s({\mathcal D}, M_1 ) = \{\lambda :\;\|\lambda\|_{{\mathcal H}^s({\mathcal D})} \leq M_1 \},\;M_1 >0.\]
\begin{theorem} \label{upper bound2}
Work under Assumption  \ref{hypopdmp} and Assumption \ref{contrainte constante}. 
Specify $\widehat \lambda_n $ with a kernel $K$ satisfying Assumption \ref{prop K} for some $n_0>0$ and
\[h_n =c_0n^{-1/(2s+1)},\;\;\varpi_n\,\:\; \text{such that} \:\; \lim_{n\rightarrow \infty} \varpi_n =0\]
For every $M_1 >0$ and  $M_2 >0$,  there exist $c_0=c_0(\mathfrak{c}, M_1 , M_2)$ and $d(\mathfrak{c})\geq0$ such that for every $0<s<n_0$ and every compact interval ${\mathcal D}\subset (d(\mathfrak{c}),\infty)$ such that $\inf {\mathcal D} \geq f(r)$, we have
\[
\sup_{\lambda}\E_{\mu}\big[\|\widehat \lambda_n-\lambda\|_{L^2({\mathcal D})}^2\big]^{1/2} \lesssim \varpi_n^{-2} n^{-s/(2s+1)},\]
where the supremum is taken over
\[ (\lambda,f, \phi )\in {\mathcal F}(\mathfrak{c},m,M), \:\; \:\; \lambda \in  {\mathcal H}^s({\mathcal D}, M_{1}),\:\; \:\; \:\;g_x \in {\mathcal H}^s({\mathcal D}) ,\:\; \:\;\|f\|_{L^\infty({\mathcal D})} \leq M_2 ,\:\; \:\;\text{and}\:\; f^{-1} \in{\mathcal H}^s({\mathcal D})\]
and $\E_{\mu}[\cdot]$ denotes expectation with respect to any initial distribution $\mu(d\boldsymbol{x})$ for $(Z_{0})$ on $\R^+$ such that $\int{\mathbb V}(x)^2\mu(dx)<\infty$.
 \end{theorem}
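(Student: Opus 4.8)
\emph{Proof strategy.} The starting point is the exact identity $\lambda(y)D(y)=\nul(f(y))$ furnished by \eqref{formulel}, where $D(y):=\E_{\nul}\big(g_{Z_0}(f(y))\nbOne_{\{f(Z_0)\leq f(y)\}}\nbOne_{\{Z_1\geq f(y)\}}\big)$ is the population denominator; this makes $\widehat\lambda_n$ a genuine plug-in estimator. Writing $\widehat N_n(y)=n^{-1}\sum_{k=1}^n K_{h_n}(Z_k-f(y))$ for the numerator and $\widehat D_n(y)=\bar D_n(y)\vee\varpi_n$ for the thresholded denominator in \eqref{def estimator}, with $\bar D_n(y)=n^{-1}\sum_{k=1}^n g_{Z_{k-1}}(f(y))\nbOne_{\{Z_k\geq f(y),\,f(y)\geq f(Z_{k-1})\}}$, the identity gives
\[
\widehat\lambda_n(y)-\lambda(y)=\frac{\big(\widehat N_n(y)-\nul(f(y))\big)-\lambda(y)\big(\widehat D_n(y)-D(y)\big)}{\widehat D_n(y)},
\]
and since $\widehat D_n(y)\geq\varpi_n$ pointwise and $\|\lambda\|_{L^\infty({\mathcal D})}\leq M_1$ on the class,
\[
\|\widehat\lambda_n-\lambda\|_{L^2({\mathcal D})}^2\ \lesssim\ \varpi_n^{-2}\Big(\|\widehat N_n-\nul\circ f\|_{L^2({\mathcal D})}^2+\|\widehat D_n-D\|_{L^2({\mathcal D})}^2\Big).
\]
It thus suffices to prove, uniformly over the class, $\E_\mu\|\widehat N_n-\nul\circ f\|_{L^2({\mathcal D})}^2\lesssim n^{-2s/(2s+1)}$ and $\E_\mu\|\widehat D_n-D\|_{L^2({\mathcal D})}^2\lesssim\varpi_n^{-2}n^{-2s/(2s+1)}$; then the bracket is $\lesssim\varpi_n^{-2}n^{-2s/(2s+1)}$, the left side is $\lesssim\varpi_n^{-4}n^{-2s/(2s+1)}$, and taking square roots yields the claim. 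Throughout I would first replace $\E_\mu$ by $\E_{\nul}$: the discrepancy coming from a non-stationary start is controlled by \eqref{contraction} together with $\int{\mathbb V}^2d\mu<\infty$, and is of smaller order than the target rate.

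\emph{The numerator.} Here $\widehat N_n(y)$ is a kernel density estimator of the invariant density $\nul$ evaluated at $f(y)$, for a chain with stationary law $\nul$. For the bias, $\E_{\nul}[\widehat N_n(y)]=\int K_{h_n}(z-f(y))\nul(z)dz$, and a Taylor expansion combined with the vanishing moments of $K$ (Assumption \ref{prop K}) gives $|\E_{\nul}[\widehat N_n(y)]-\nul(f(y))|\lesssim h_n^{s}$, provided $\nul$ lies in ${\mathcal H}^{s}$ on a neighbourhood of $f({\mathcal D})$. This regularity is not assumed but must be derived from the assumed regularity of $\lambda,f,g_x$ through the explicit transition density \eqref{density explicit} and the fixed point equation $\nul\pl=\nul$ — an auxiliary lemma of the type collected in Subsection \ref{rate of convergence for the empirical measure}. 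For the variance I would bound $\operatorname{Var}_{\nul}(\widehat N_n(y))$ by $n^{-1}\operatorname{Var}_{\nul}\!\big(K_{h_n}(Z_0-f(y))\big)$ plus $n^{-1}$ times a sum of covariances; the marginal variance is $\lesssim h_n^{-1}$ because $\nul$ is bounded near $f({\mathcal D})$, while $\operatorname{Cov}_{\nul}\!\big(K_{h_n}(Z_0-f(y)),K_{h_n}(Z_k-f(y))\big)\lesssim h_n^{-1}\gamma^k$ by applying \eqref{contraction} to $\psi=K_{h_n}(\cdot-f(y))/\|K_{h_n}\|_\infty$ and using the $\nul$-integrability of ${\mathbb V}$ localised near $f({\mathcal D})$. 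Summing the geometric series gives $\operatorname{Var}_{\nul}(\widehat N_n(y))\lesssim(nh_n)^{-1}$ uniformly in $y\in{\mathcal D}$ and over the class, hence $\E_{\nul}\|\widehat N_n-\nul\circ f\|_{L^2({\mathcal D})}^2\lesssim h_n^{2s}+(nh_n)^{-1}$.

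\emph{The denominator.} The unthresholded $\bar D_n(y)$ is an ergodic average of a functional of the pair $(Z_{k-1},Z_k)$ bounded by $M(f(y))\leq\sup_{{\mathcal D}}M$, with mean $D(y)$; arguing as for the variance above (now with a bounded, not $h_n^{-1}$-large, integrand and a geometric covariance bound from \eqref{contraction}) gives $\E_{\nul}\|\bar D_n-D\|_{L^2({\mathcal D})}^2\lesssim n^{-1}$. To pass to $\widehat D_n=\bar D_n\vee\varpi_n$ one needs a uniform lower bound $\inf_{y\in{\mathcal D}}D(y)\geq c(\mathfrak c)>0$ over the class: this is where the hypotheses ${\mathcal D}\subset(d(\mathfrak c),\infty)$ and $\inf{\mathcal D}\geq f(r)$, combined with \eqref{lambdaL}--\eqref{lambdapoly} and the drift structure behind \eqref{contraction}, are used to bound $\nul$ from above and below on $f({\mathcal D})$. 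Given $D\geq c(\mathfrak c)$ and $\varpi_n\to0$, on the event $\{\bar D_n(y)<\varpi_n\}$ one has $|\bar D_n(y)-D(y)|\gtrsim c(\mathfrak c)$, whose probability is controlled by a deviation estimate for the chain (Chebyshev through the covariance bound, or an exponential inequality via ${\mathbb V}$), and on that rare event one crudely bounds $\widehat\lambda_n\leq\|K\|_\infty h_n^{-1}\varpi_n^{-1}$. Collecting the pieces one obtains $\E_{\nul}\|\widehat D_n-D\|_{L^2({\mathcal D})}^2\lesssim\varpi_n^{-2}n^{-2s/(2s+1)}$; it is precisely the handling of the threshold — both the division by $\widehat D_n\geq\varpi_n$ and the control of the sub-threshold event — that produces the extra factor $\varpi_n^{-2}$.

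\emph{Conclusion and main difficulty.} Inserting the two estimates into the decomposition of the first paragraph and choosing $h_n=c_0n^{-1/(2s+1)}$ balances $h_n^{2s}\asymp(nh_n)^{-1}\asymp n^{-2s/(2s+1)}$, which after taking square roots gives the announced $\varpi_n^{-2}n^{-s/(2s+1)}$, with $c_0$ absorbing $\mathfrak c,M_1,M_2$ and all implied constants uniform over the class, since Proposition \ref{prop transition} — hence $R,\gamma$ and ${\mathbb V}$ — and the auxiliary bounds on $\nul$ and $D$ are uniform over ${\mathcal F}(\mathfrak c,m,M)$. The main obstacle I anticipate is not the nonparametric bias--variance bookkeeping, which is standard, but the analytic facts about the \emph{implicitly defined} invariant density $\nul$ that must hold uniformly over the class: that $\nul$ inherits ${\mathcal H}^s$-regularity on $f({\mathcal D})$ from $\lambda,f,g_x$; that $\nul$ and $D$ are bounded above and below on $f({\mathcal D})$ by constants depending only on $\mathfrak c$ (using the class constraints near $0$ and $\infty$ and the geometric ergodicity); and that the empirical averages of $h_n^{-1}$-sized kernels and of the denominator functional concentrate with the correct dependence on $h_n$ and $n$ despite the dependence in the chain — all of which rest on the fast-growing Lyapunov function ${\mathbb V}=\exp\!\big(\tfrac{a}{b+1}(f(\cdot))^{b+1}\big)$ being $\nul$-integrable and on the uniform contraction \eqref{contraction}.
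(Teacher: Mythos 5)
Your proposal is correct and rests on the same pillars as the paper's proof — the identity \eqref{formulel}, the transfer of H\"older regularity to $\nul$ (the paper's Lemma \ref{reg transfer}), the uniform lower bounds on $\nul$ and on $D$ over the class (Lemmas \ref{minoration mes inv} and \ref{minoration D}, which is exactly where ${\mathcal D}\subset(d(\mathfrak c),\infty)$ enters), and covariance bounds for the ergodic averages obtained from \eqref{contraction} and the drift condition (Propositions \ref{vitessed} and \ref{moment lemma bis}) — but your top-level decomposition is genuinely different. You linearize into a single fraction $\big[(\widehat N_n-\nul\circ f)-\lambda(\widehat D_n-D)\big]/\widehat D_n$ and bound the denominator crudely by $\varpi_n$ everywhere, so the bias term also pays a factor $\varpi_n^{-1}$ and you land on $\E_\mu[\|\widehat\lambda_n-\lambda\|^2]^{1/2}\lesssim\varpi_n^{-2}n^{-s/(2s+1)}$, which is exactly the stated bound. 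The paper instead writes $\widehat\lambda_n-\lambda=I+II+III$ with the bias term $I$ divided by the \emph{true} denominator $D(y)$, which is bounded below by a constant via Lemma \ref{minoration D}; this yields $\E_\mu[\|\widehat\lambda_n-\lambda\|^2]\lesssim h_n^{2s}+\varpi_n^{-2}(nh_n)^{-1}+\varpi_n^{-2}n^{-1}$ and hence the slightly sharper $\varpi_n^{-1}n^{-s/(2s+1)}$ after the square root — your version is lossier by one power of $\varpi_n$ but still within the theorem's claim. One point where your sketch is looser than the actual argument: "first replace $\E_\mu$ by $\E_{\nul}$" is not literally how the non-stationary start is handled for second moments of path functionals; the paper conditions on $Z_{k\wedge k'}$, applies \eqref{contraction} to the forward factor, and controls $\E_\mu[{\mathbb V}(Z_{k\wedge k'})\cdots]$ through the drift bound $\pl^{j}{\mathbb V}\lesssim 1+{\mathbb V}$ and the hypothesis $\int{\mathbb V}^2\,d\mu<\infty$. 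Since these are precisely the ingredients you cite, this is an organizational imprecision rather than a gap, but it is the step you would need to write out carefully.
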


We observe that we recover the result for the marked bacteria of \cite{hof}. In this case $\phi (x,t)=xe^{\kappa_0 t}$ with $\kappa_0\in\R^{+ *}$ and $f(x)=x/2$, so that $g_x (y)=\frac{1}{\kappa_0 y}$. We find the same estimator but the speed of convergence is a little bit better, as $\varpi_n$  need not be $\log (n)$; rather we only require that $\lim_{n\rightarrow \infty} \varpi_n^{-1} =0$.

\subsection{Numerical implementation}\label{simulation}
The goal of this subsection is to illustrate the asymptotic behaviour of our estimator via numerical experiments. More precisely we first investigate numerical simulations for the TCP.

The TCP window-size process appears as the scaling limit of the transmission rate of a server uploading packets
on the Internet according to the algorithm used in the TCP (Transmission Control Protocol) in order to avoid
congestion (see \cite{DGR02} for details on this scaling limit). This PDMP takes  values in $\mathbb{R}^+$ and the jump rate $\lambda$ is the identity function. The function $f$ which represents the proportion of the size kept after the jump is $f(x)=x/2$. The flow is $\phi(x,t)=x+t$.

As a consequence  the size of the process after the $n$-th jump $Z_n$, conditional on $Z_{n-1}$, has the same law as $\sqrt{(1/4)Z_{n-1}^2+e_n/2}$, where  $(e_n )_{n\geq 0}$ is a family of i.i.d. random variables with exponential distribution of parameter 1. The variable $e_n$ is also independent of $(Z_i )_{i\leq n-1}$. 
As a consequence it is  easy to generate the $(Z_n )_{n\geq 0}$ recursively. A trajectory of such a PDMP is given in Figure \ref{fig1}.  These processes satisfy the assumptions required for our Theorem, with $\kappa=1/2$, $m(\cdot )=1$, $M(\cdot ) =1.1$, $g_\cdot (\cdot)=1/2$, $r=1$, $a=1$, $b=1/2$, $\varpi_n= (\log (n))^{-1}$, $h_n=n^{-1/3}$ and $K(x)=(2\pi )^{1/2} exp( -x^2/2)$. With the Gaussian Kernel, for which $n_0=1$ for Assumption 2, we expect a rate of convergence of order $n^{1/3}$
at best.

We display our numerical results as specified above in Figures \ref{fig1}, \ref{fig2} and \ref{fig3}.

Figure \ref{fig2} displays the reconstruction of $\lambda$  for different simulated samples, for $n=1000$, $n=10000$ and $n=100000$. As expected, the estimation is better for larger $n$. The estimator performs worse fo small $x$ as these sizes are rarely reached by the TCP process.

In figure \ref{fig3}, we plot  the empirical mean error of our estimation procedure on a $\log$-$\log$ scale. The numerical results agree with the theory.

\begin{figure}[h]
   \begin{center}
      \includegraphics[height=8.5cm]{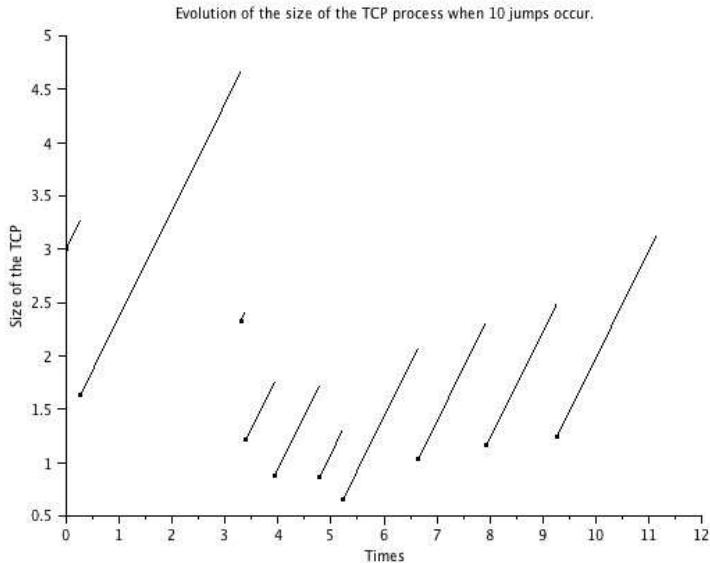}
   \end{center}
   \caption{\label{fig1}\footnotesize Evolution of the TCP process when 10 jumps occur.}
\end{figure}
\begin{figure}[h]
\includegraphics[height=4.125cm]{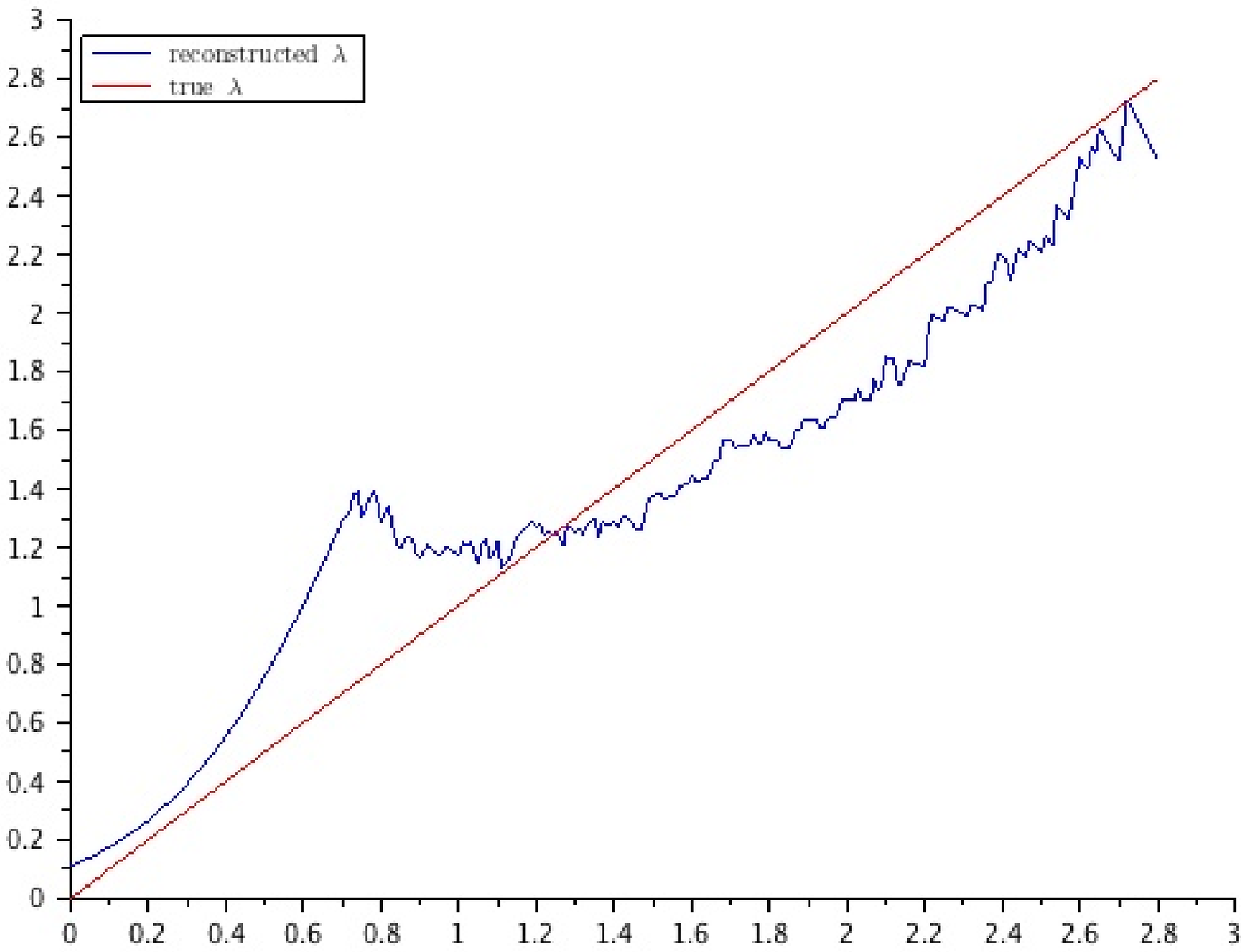}\includegraphics[height=4.15cm]{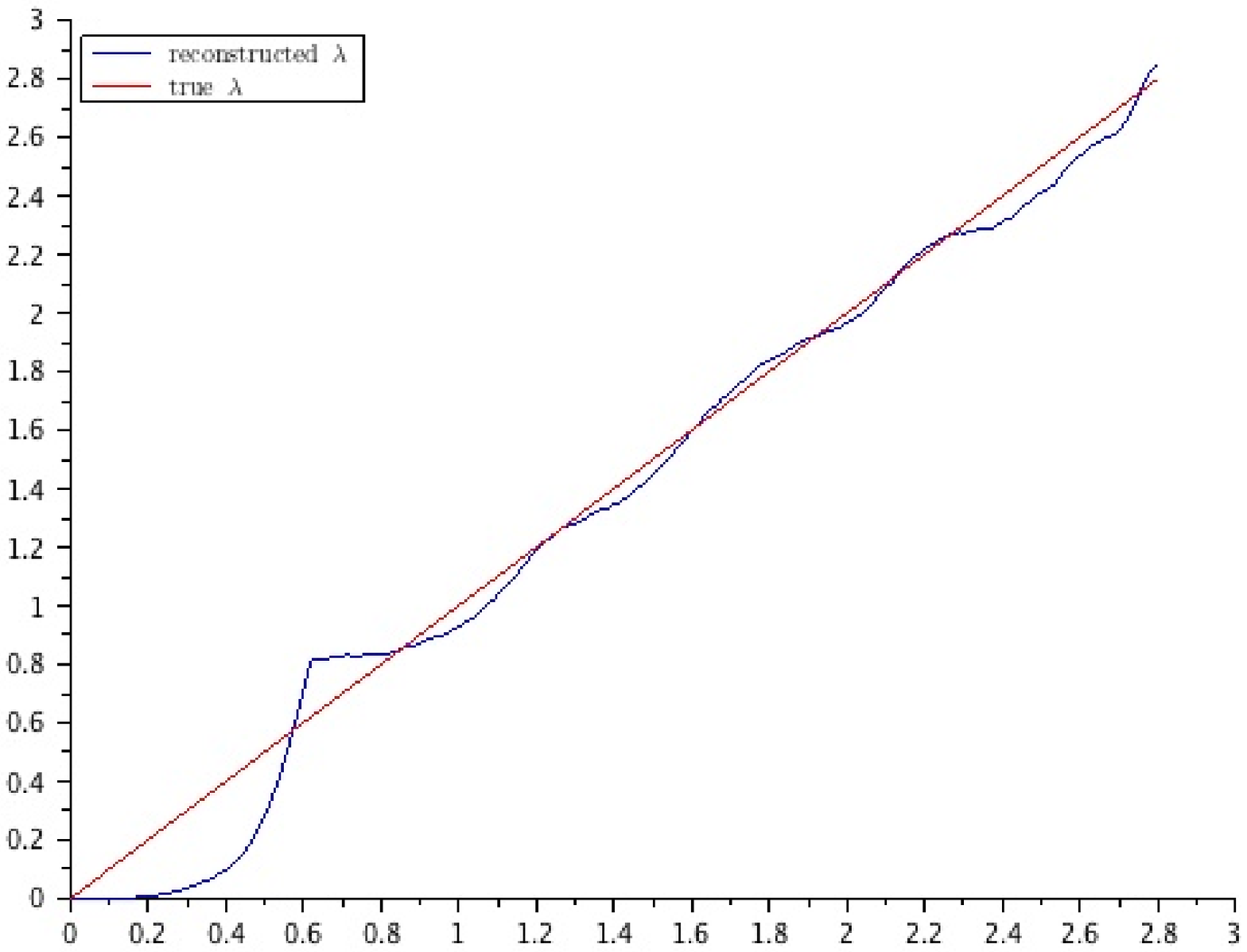}\includegraphics[height=4.125cm]{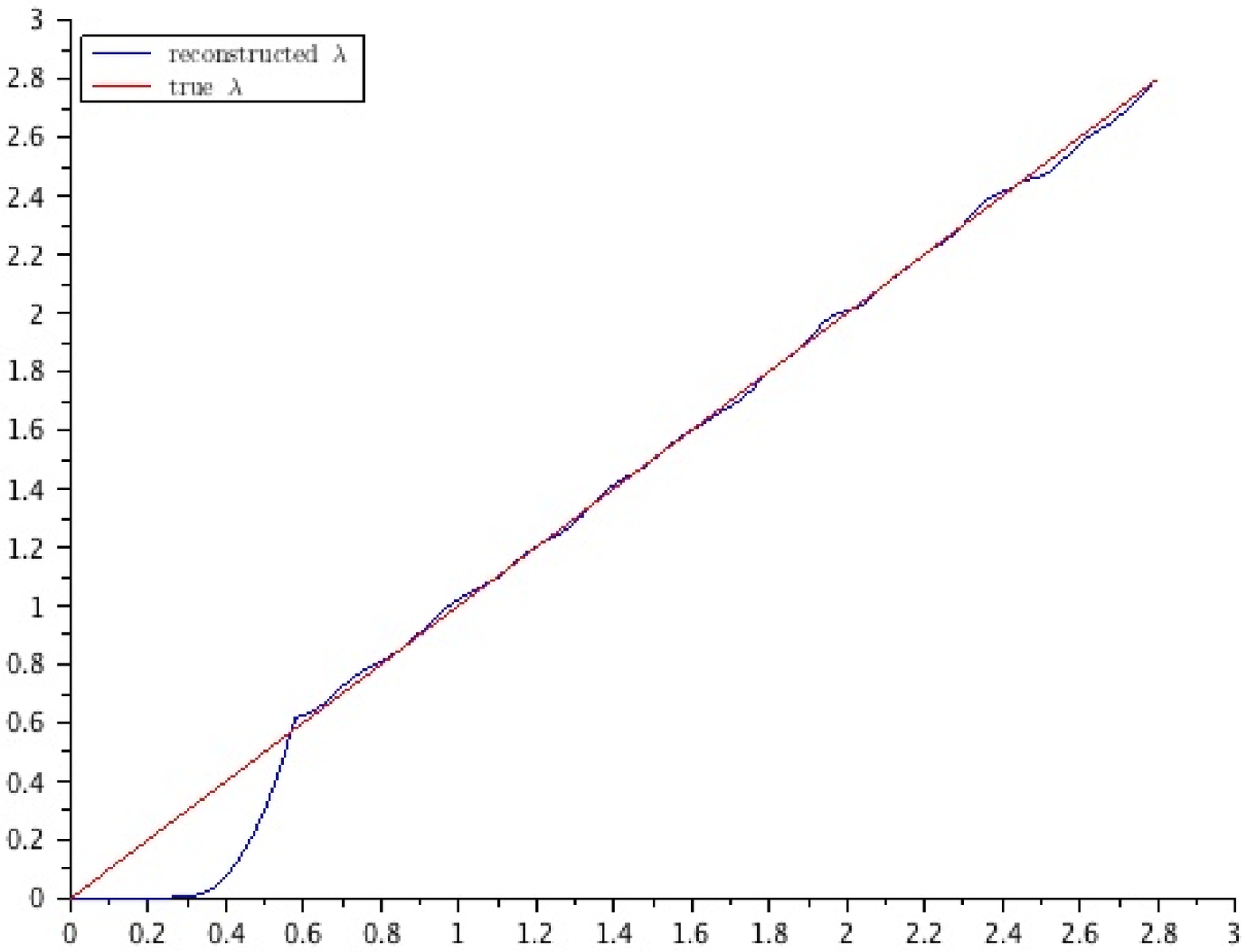}
 \caption{\label{fig2}\footnotesize Reconstruction of $\lambda$ for $n=1000$, $n=10000$ and $n=100000$ in the TCP case.}
\end{figure}
\begin{figure}[h]
   \begin{center}
 \includegraphics[height=7cm]{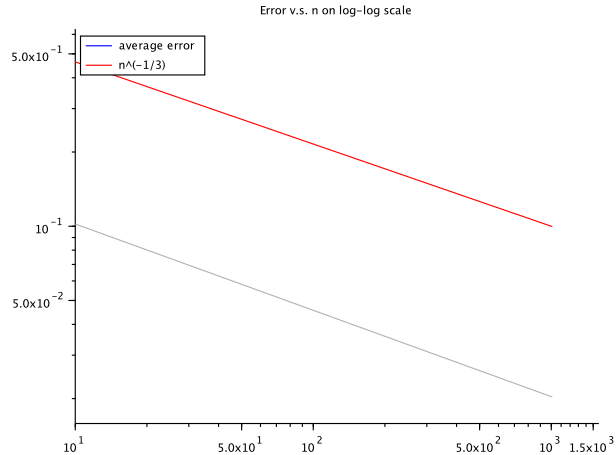}
    \end{center} \caption{\label{fig3}\footnotesize The empirical mean error of the estimation procedure v.s.  the theoretical rate on a log-log scale in the TCP case.}
\end{figure}

We now consider  a bacteria marked case, for which the size of the bacterium are divided by 3 instead of 2 at division. Thus we have $f(x)=x/3$, $\phi (x,t)=xe^t$ and therefore $g_x (y)=1/y$. Let $(e_{i})_{i\in\mathbb{N}^*}$ a i.i.d family of exponential law of parameter 1. Conditionally on $Z_n =x$ the law of $Z_{n+1}$ is equal in law to $\frac{x}{3}-\frac{1}{3} \log (e_{n+1})$.

As before Figure \ref{fig4} displays the reconstruction of $\lambda$  for a simulated sample with $n=100000$. 

\begin{figure}[h]
   \begin{center}
 \includegraphics[height=7cm]{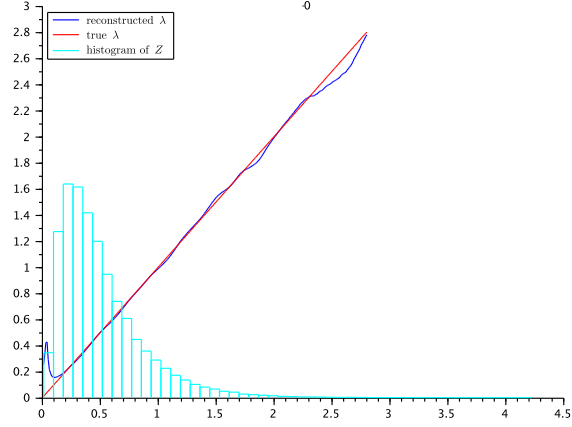}
    \end{center} \caption{\label{fig4}\footnotesize Reconstruction of $\lambda$ for  $n=100000$ in the marked bacteria case.}
\end{figure}

\section{Proof}\label{4}
\subsection{Proof of Proposition \ref{prop transition}}\label{4.1}

We will follow the same idea as in \cite{hof}. We prove a minorisation condition, strong aperiodicity and a drift condition for the transition operator $\pl$ in order to use Theorem 1.1 of \cite {B}.

\begin{proof}[Minorisation condition]
Let $\lambda$ be such that $(\lambda,f, \phi )\in {\mathcal F}(\mathfrak{c},m,M)$ and set $\mathcal{C}=(0,r)$ where $r$ is specified by $\mathfrak{c}$. Fix a measurable $A\in \mathcal{F}$ and $x\in\mathcal{C}$;
thanks to (\ref{gmM}), we have
\[\pl (x,A)\geq\int_{A}\lambda (f^{-1}(y))e^{-\int_{f(x)}^{y}\lambda (f^{-1}(s))M(s)ds}m(y)\nbOne_{\{y\geq f(x)\}}dy. \]
We introduce the function $\varphi_{\lambda}$   \begin{equation}\label{varl}\varphi_{\lambda} (y):=\lambda (f^{-1}(y))e^{-\int_{0}^{y}\lambda (f^{-1}(s))M(s)ds}m(y)\:\; \:\; \:\:\; \:\; \: \forall y\in \R^+ ,\end{equation}  and the measure $\mu_{\lambda}$ \[\mu_{\lambda}(dy):=\frac{\varphi_{\lambda}(y)}{c_{\lambda}}\nbOne_{\{y>f(r)\}}dy,\] where $c_{\lambda}=\int_{f(r)}^{\infty}\vl(u) du$.  Thus, we have
\[\pl (x,A)\geq\mu_{\lambda}(A)c_{\lambda}.\]
By using (\ref{lambdainf}) and (\ref{lambdaL}), we get that
\[c_{\lambda}=\int_{f(r)}^{\infty}\vl(u) du=\left[\frac{-m(y)e^{-\int_{0}^{y}\lambda (f^{-1}(s))M(s)ds}}{M(y)}\right]_{f(r)}^{\infty}\geq \frac{m(f(r))e^{-L}}{M(f(r))}:=\tilde\beta>0.\]
This shows that  the following minorisation condition holds for every $x\in\mathcal{C}$ and $A\in\mathcal{F}$ uniformly in $\lambda$ such that $(\lambda,f, \phi )\in {\mathcal F}(\mathfrak{c},m,M)$:
\begin{equation} \label{minorisation}\pl (x,A)\geq\mu_{\lambda}(A)\tilde\beta. \end{equation}
\end{proof}
\begin{proof}[Strong aperiodicity condition]
We have

\begin{align}
\mu_{\lambda}(\mathcal{C}) \tilde\beta&=c_{\lambda}^{-1}\tilde\beta\int_{f(r)}^{r}\vl (y)dy
=c_{\lambda}^{-1}\tilde\beta\left[\frac{-m(y)e^{-\int_{0}^{y}\lambda (f^{-1}(s))M(s)ds}}{M(y)}\right]_{f(r)}^{r}\nonumber \\
&=\tilde\beta(1-\frac{m(r)M(f(r))}{M(r)m(f(r))}e^{-\int_{f(r)}^{r}\lambda (f^{-1}(s))M(s)ds},)
\end{align}

  using  the computation we just did for $c_{\lambda}$.

Now we use (\ref{lambdal}) to get that 
\begin{equation}\label{aperiodicity}
\mu_{\lambda}(\mathcal{C}) \tilde\beta\geq\tilde\beta\left(1-\frac{m(r)M(f(r))}{M(r)m(f(r))}e^{-l}\right):=\beta>0.\end{equation}

\end{proof}
\begin{proof}[Drift condition] 
Let $\lambda$ be such that $(\lambda,f, \phi )\in {\mathcal F}(\mathfrak{c},m,M)$
and recall that  ${\mathbb V}:{\R^+}\rightarrow [1,\infty)$, which is defined in  \eqref{first def Lyapu}, is  continuously differentiable and satisfies
\begin{equation} \label{control V alinfini}
\lim_{y \rightarrow \infty} {\mathbb V}(y)\exp\big(- \tfrac{a}{b+1}y^{b+1}\big)=0.
\end{equation}
For $x \geq r$, using \eqref{gmM} and   integration by parts with the boundary condition \eqref{lambdainf}, we have, 
\begin{align}
\pl {\mathbb V}(x)  & =  \int_{f(x)}^\infty {\mathbb V}(y)\lambda (f^{-1}(y))e^{-\int_{f(x)}^{y}\lambda (f^{-1}(s))g_{x}(s)ds}g_{x}(y)dy\nonumber \\
& \leq \int_{f(x)}^\infty  {\mathbb V}^{'}(y) e^{-\int_{f(x)}^{y}\lambda (f^{-1}(s))m(s)ds}dy.\nonumber 
\end{align}
Thanks to (\ref{lambdapoly}), we get that

$$\begin{disarray}{rcl}
\pl {\mathbb V}(x)   &  \leq & \int_{f(x)}^\infty  {\mathbb V}^{'}(y) e^{-\int_{f(x)}^{y}as^{b}ds}dy  \leq e^{a\frac{f(x)^{b+1}}{b+1}} \int_{f(x)}^\infty  {\mathbb V}^{'}(y) e^{-a\frac{y^{b+1}}{b+1}} dy.
\end{disarray}$$

Integrating again by parts and using (\ref{control V alinfini}), 
we obtain that

$$\begin{disarray}{rcl}
\pl {\mathbb V}(x)  
   &  \leq &e^{a\frac{f(x)^{b+1}}{b+1}} \int_{f(x)}^\infty  {\mathbb V}(y) ay^{b}e^{-a\frac{y^{b+1}}{b+1}} dy.
\end{disarray}$$

Now use the change of variable $z=a\frac{y^{b+1}}{b+1}$ and  the definition of  ${\mathbb V}(x)$. As \eqref{control V alinfini} is satisfied, we get

$$\begin{disarray}{rcl}
\pl {\mathbb V}(x)  
   &  \leq& {\mathbb V}(x)   \int_{a\frac{f(x)^{b+1}}{b+1}}^\infty  e^{\frac{a}{b+1}\big(f((\frac{z(b+1)}{a})^{1/(b+1)})\big)^{b+1}-z} dz.
\end{disarray}$$

By using (\ref{fk}), we obtain,  for $x\geq r$

$$\begin{disarray}{rcl}
\pl {\mathbb V}(x)  
   &  \leq& {\mathbb V}(x)  \int_{a\frac{f(x)^{b+1}}{b+1}}^\infty  e^{(\kappa^{b+1}-1)z} dz.\end{disarray}$$
 
   Therefore,
\begin{equation} \label{first drift}\pl {\mathbb V}(x)  \leq   {\mathbb V}(x) \delta(\mathfrak{c},f),
\end{equation}
with
\[
\delta(\mathfrak{c},f)= \frac{1}{1-\kappa^{b+1}} \exp\big(- (1-\kappa^{b+1})\tfrac{a}{b+1} (f(r))^{b+1}\big),
\]
and 
we have $\delta(\mathfrak{c},f) < 1$ by Assumption \ref{contrainte constante}.

We next need to control $\pl{\mathbb V}$ outside $x \in [r,\infty)$, that is on the small set ${\mathcal C}$.
For every $x \in {\mathcal C}$, we have
\begin{align}
 \pl{\mathbb V}(x) 
 \leq &\Big(\int_{f(x)}^{f(r)} {\mathbb V}(y) \lambda (f^{-1}(y))g_{x}(y)dy + \int_{f(r)}^\infty {\mathbb V}(y)\lambda (f^{-1}(y))e^{-\int_{f(x)}^{y}\lambda (f^{-1}(s))g_{x}(s)ds}g_{x}(y)dy\Big)\nonumber \\
 \leq &\,  M\sup_{y \in [0,f(r)]}{\mathbb V}(y) L+\delta(\mathfrak{c},f) {\mathbb V}(r) =: K <\infty , \label{second drift} 
\end{align}
where we used \eqref{lambdaL}, \eqref{gmM}, \eqref{first drift} for $x=r$ and the fact that  $(\lambda,f, \phi )\in {\mathcal F}(\mathfrak{c},m,M)$. 
Combining  \eqref{first drift} and \eqref{second drift}, we conclude that
\begin{equation} \label{drift condition}
\pl {\mathbb V}(\boldsymbol{x}) \leq \delta(\mathfrak{c},f) {\mathbb V}(\boldsymbol{x}) \nbOne_{\{\boldsymbol{x} \notin {\mathcal C}\}}+ K\nbOne_{\{\boldsymbol{x}\in {\mathcal C}\}}. 
\end{equation}

\end{proof}
\begin{proof}[End of the proof of Proposition \ref{prop transition}]
 By Theorem 1.1 in Baxendale \cite{B} the minorisation condition \eqref{minorisation} together with the strong aperiodicity condition \eqref{aperiodicity} and the drift condition \eqref{drift condition} imply inequality \eqref{contraction}, with $R$ and $\gamma $ that explicitly depend on $\delta(\mathfrak{c},f)$, $\beta$, $\tilde \beta$, ${\mathbb V}$ and $K$. By construction, this bound is uniform in $\lambda$ such that $(\lambda,f, \phi )\in {\mathcal F}(\mathfrak{c},m,M)$. More specifically, we have
$$\gamma < \min\{\max\{\delta(\mathfrak{c},f), \gamma_{\lambda ,{\mathbb V}}\}, 1\},$$ with $ \gamma_{\lambda ,{\mathbb V}}$ the spectral radius of the operator $\mathcal{P}_\lambda -1\otimes\nu_\lambda$ acting on the  Banach space of functions $\psi :R_+ \rightarrow \R$ such that \[\sup\left\{\frac{|\psi (x)|}{\mathbb{V} (x)},\:\; x\in\R_+\right\}<\infty.\]  
Therefore, under Assumption (\ref{lambdaexis}) we have $\gamma <1$.

It remains to prove equality \eqref{formulel}. As $\pl (x,dy)=\pl (x,y)dy$ and \[\nul\pl =\nul,\] we have that $\nul (dy)=\nul (y)dy$ and
$$\begin{disarray}{rcl}
\nul (y)&=&\int_{\R_+} \nul(x)\pl (x,y)dx\\
&=&\int_{\R_+} \nul(x)\lambda (f^{-1}(y))e^{-\int_{f(x)}^{y}\lambda (f^{-1}(s))g_{x}(s)ds}g_{x}(y)\nbOne_{\{f(x)\leq y\}}dx.
\end{disarray}$$
Thanks to (\ref{lambdainf}), we get that 
\[e^{-\int_{f(x)}^{y}\lambda (f^{-1}(s))g_{x}(s)ds}=\int_{y}^{\infty}\lambda (f^{-1}(s))g_{x}(s) e^{-\int_{f(x)}^{s}\lambda (f^{-1}(s^{'}))g_{x}(s^{'})ds^{'}}ds.\]
Therefore,
\begin{align*}
\nul (y)
=&\lambda (f^{-1}(y))\int_{\R^+} \nul(x)g_{x}(y) \nbOne_{\{f(x)\leq y\}}\int_{y}^{\infty}\lambda (f^{-1}(s))g_{x}(s)e^{-\int_{f(x)}^{s}\lambda (f^{-1}(s^{'}))g_{x}(s^{'})ds^{'}}dsdx
\\=&\lambda (f^{-1}(y))\int_{\R^+} \nul(x)g_{x}(y) \nbOne_{\{f(x)\leq y\}}\int_{y}^{\infty} \nbOne_{\{s\geq y\}}\pl (x,s) dsdx\\=&\lambda (f^{-1}(y))\E_{\nu_\lambda} (g_{Z_0}(y) \nbOne_{\{f(Z_0 )\leq y\}} \nbOne_{\{Z_1 \geq y\}}).
\end{align*}

\end{proof}

\subsection{Rate of convergence for the empirical measure} \label{rate of convergence for the empirical measure}
We now give a few results that we will need for the proof of Theorem \ref{upper bound} in the next Subsection. 
In fact, we decompose the square loss error  into a sum of three terms that we will study in the following Propositions.

The notation $\lesssim$ means inequality up to a constant that not depend on $n$.

\begin{lemma} \label{minoration mes inv}
For any $\mathfrak{c}$ such that Assumptions \ref{contrainte constante} and \ref{hypopdmp} are satisfied, there exists a constant $d(\mathfrak{c}) \geq 0$ such that for any compact interval ${\mathcal D}\subset (d(\mathfrak{c}),\infty)$, we have
$$\inf_{\lambda :\: (\lambda,f, \phi )\in {\mathcal F}(\mathfrak{c},m,M)}\inf_{x \in {\mathcal D}}\varphi_\lambda(x)^{-1}\nul (x) >0,$$
where $\varphi_\lambda (x)$ is defined in \eqref{varl}.
\end{lemma}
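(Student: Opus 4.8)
The plan is to bound $\nul(x)$ from below by a multiple of $\varphi_\lambda(x)$, uniformly over the class ${\mathcal F}(\mathfrak{c},m,M)$ and uniformly over $x$ in a compact interval ${\mathcal D}$ bounded away from a threshold $d(\mathfrak{c})$. The starting point is the invariance identity derived at the end of the proof of Proposition~\ref{prop transition}, namely $\nul(y)=\int_{\R^+}\nul(x)\pl(x,y)\,dx$ with $\pl(x,y)$ given explicitly by \eqref{density explicit}. The first step is to restrict the domain of integration to $x\in(0,r)={\mathcal C}$: this only decreases the integral, and on ${\mathcal C}$ we have $f(x)\le f(r)\le \inf{\mathcal D}\le y$, so the indicator $\nbOne_{\{y\ge f(x)\}}$ is automatically $1$. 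Thus
\[
\nul(y)\;\ge\;\lambda(f^{-1}(y))\,\Big(\int_{\mathcal C}\nul(x)\,g_x(y)\,e^{-\int_{f(x)}^{y}\lambda(f^{-1}(s))g_x(s)\,ds}\,dx\Big).
\]

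The second step is to lower-bound the integrand over $x\in{\mathcal C}$. Using \eqref{gmM}, $g_x(y)\ge m(y)$, and since $f(x)\ge 0$ we have $\int_{f(x)}^{y}\lambda(f^{-1}(s))g_x(s)\,ds\le \int_{0}^{y}\lambda(f^{-1}(s))M(s)\,ds$, so the exponential is bounded below by $e^{-\int_0^y\lambda(f^{-1}(s))M(s)\,ds}$. Comparing with the definition \eqref{varl} of $\varphi_\lambda$, we get exactly $g_x(y)\,e^{-\int_{f(x)}^{y}\lambda(f^{-1}(s))g_x(s)\,ds}\ge m(y)\,e^{-\int_0^y\lambda(f^{-1}(s))M(s)\,ds}=\varphi_\lambda(y)/\lambda(f^{-1}(y))$. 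Plugging this in, the factors of $\lambda(f^{-1}(y))$ cancel and we obtain
\[
\nul(y)\;\ge\;\varphi_\lambda(y)\,\nul({\mathcal C}),
\]
so it remains to show $\inf_{\lambda}\nul({\mathcal C})>0$.

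The third step — which I expect to be the main obstacle — is the uniform lower bound on the invariant mass $\nul({\mathcal C})=\nul((0,r))$ of the small set. Here one should exploit the geometric ergodicity from Proposition~\ref{prop transition} together with the drift condition \eqref{drift condition}. A clean route: iterate the invariance $\nul=\nul\pl$ once more to write $\nul({\mathcal C})=\int\nul(x)\pl(x,{\mathcal C})\,dx$, and bound $\pl(x,{\mathcal C})$ below. For $x\in{\mathcal C}$ this is the strong aperiodicity bound \eqref{aperiodicity}: $\pl(x,{\mathcal C})\ge\mu_\lambda({\mathcal C})\tilde\beta\ge\beta$. For $x\notin{\mathcal C}$, one controls the contribution using the Lyapunov function: by \eqref{drift condition} and $\nul\pl{\mathbb V}=\nul{\mathbb V}<\infty$, the invariant measure puts mass at most $(K-\delta(\mathfrak{c},f))^{-1}$-ish — more precisely integrating \eqref{drift condition} against $\nul$ gives $\nul({\mathbb V})\le \delta(\mathfrak c,f)\nul({\mathbb V})+K$, hence $\nul({\mathbb V})\le K/(1-\delta(\mathfrak c,f))$, all uniform over the class. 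Since ${\mathbb V}\ge 1$ and ${\mathbb V}(x)\to\infty$, a Markov-inequality argument bounds $\nul$ of the far tail $\{x> N\}$ uniformly, and on the remaining bounded annulus $\{r\le x\le N\}$ one shows $\pl(x,{\mathcal C})$ is bounded below by a positive constant depending only on $\mathfrak{c}$ (using the explicit density \eqref{density explicit}, \eqref{gmM}, \eqref{lambdaL}–\eqref{lambdal}); combining, $\nul({\mathcal C})=\int\pl(x,{\mathcal C})\nul(dx)$ is bounded below by a positive constant uniformly in $\lambda$. The choice of $d(\mathfrak{c})$ is then dictated by how far out one must go to make the exponential prefactors and the tail truncation quantitative; taking $\inf{\mathcal D}\ge f(r)$ as assumed ensures the indicator simplification in Step~1 is valid. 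The delicate point throughout is keeping every constant dependent only on $\mathfrak{c},m,M$ and not on the particular $\lambda$, which is exactly what the uniform minorisation, aperiodicity and drift bounds of Section~\ref{4.1} were set up to provide.
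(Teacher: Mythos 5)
Your Steps 1 and 2 are correct and close in spirit to the paper: both arguments start from the invariance identity $\nul(y)=\int_{\R^+}\nul(x)\pl(x,y)\,dx$ with the explicit kernel \eqref{density explicit}, restrict to $x$ with $f(x)\le y$, and use \eqref{gmM} to extract the factor $\varphi_\lambda(y)$. The difference is in which set of $x$ you keep. You keep the small set ${\mathcal C}=(0,r)$, which reduces the lemma to showing $\inf_\lambda \nul({\mathcal C})>0$; the paper instead keeps the \emph{whole} set $\{x:f(x)\le y\}=[0,f^{-1}(y)]$, writes its $\nul$-mass as $1-\nul\big((f^{-1}(y),\infty)\big)$, bounds that tail by $c(\mathfrak c,f)\exp\big(-\tfrac{a}{b+1}(f(y))^{b+1}\big)$ using the uniform integrability of ${\mathbb V}$ coming from \eqref{contraction}, and then chooses $d(\mathfrak c)$ large enough that this tail is strictly below $1$. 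That is where the paper's $d(\mathfrak c)$ comes from; in your route $d(\mathfrak c)\ge f(r)$ would suffice.

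The genuine gap is in your Step 3. The one-step claim that $\pl(x,{\mathcal C})$ is bounded below by a positive constant on the annulus $\{r\le x\le N\}$ is false: by \eqref{density explicit} the measure $\pl(x,\cdot)$ is supported on $[f(x),\infty)$, so $\pl(x,{\mathcal C})=0$ for every $x\ge f^{-1}(r)$ (and $f^{-1}(r)\ge r/\kappa>r$, so such $x$ belong to the annulus). Even on $(r,f^{-1}(r))$, the class conditions \eqref{lambdaL}--\eqref{lambdal} do not prevent $\int_{f(x)}^{r}\lambda(f^{-1}(s))\,m(s)\,ds$ from being arbitrarily small, since all the mass guaranteed by \eqref{lambdal} may sit on $[f(r),f(x)]$. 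Meanwhile the contribution of $x\in{\mathcal C}$ via \eqref{aperiodicity} only yields the circular inequality $\nul({\mathcal C})\ge\beta\,\nul({\mathcal C})$. The repair is already implicit in your own sketch: when you integrate the drift condition \eqref{drift condition} against the invariant measure, keep the indicator instead of bounding $\nul({\mathcal C})$ by $1$. Invariance gives
\[
\nul({\mathbb V})=\int_{\R^+}\pl{\mathbb V}\,d\nul\le \delta(\mathfrak c,f)\,\nul({\mathbb V})+K\,\nul({\mathcal C}),
\]
with $K$ the constant of \eqref{second drift}, hence $\nul({\mathcal C})\ge (1-\delta(\mathfrak c,f))\,\nul({\mathbb V})/K\ge (1-\delta(\mathfrak c,f))/K>0$ uniformly over the class, because ${\mathbb V}\ge 1$ and $\delta(\mathfrak c,f)<1$ by Assumption \ref{contrainte constante}. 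With that single substitution your proof closes and is arguably more elementary than the paper's tail estimate; as written, however, Step 3 rests on a false lower bound for $\pl(x,{\mathcal C})$.
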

\begin{proof} Recall that ${\mathbb V}(x)=\exp\big(\frac{a}{b+1}(f(x))^{b+1}\big)$ for every $x \in [0,\infty)$. By Proposition \ref{prop transition} (and, more precisely, equation \eqref{contraction}) we have
\begin{equation} \label{integ}
\sup_{\lambda: \: (\lambda,f, \phi )\in {\mathcal F}(\mathfrak{c},m,M)}\int_{[0,\infty)}{\mathbb V}(x)\nul(x)dx<\infty,
\end{equation}
 additionally from \eqref{drift condition} in the proof of Proposition \ref{prop transition}, we have that  $\sup_{\lambda: \: (\lambda,f, \phi )\in {\mathcal F}(\mathfrak{c},m,M)}\pl{\mathbb V}(x)<\infty$  for every $x \in \R^+$ . As a consequence, for every $x\in (0,\infty)$, we have
\begin{align*}
\int_{f^{-1}(x)}^\infty \nul (y)dy \;
&\leq \exp\big(-\frac{a}{b+1}(f(x))^{b+1}\big)\int_{[0,\infty)}{\mathbb V}(y)\nul (y)dy,
\end{align*}
and this bound is uniform in $\lambda$ such that  $(\lambda,f, \phi )\in {\mathcal F}(\mathfrak{c},m,M)$ by \eqref{integ}. Therefore, for every $x\in (0,\infty)$, we have
\begin{equation} \label{ineg qui tue}
\sup_{\lambda: (\lambda,f, \phi )\in {\mathcal F}(\mathfrak{c},m,M)} \int_{f^{-1}(x)}^\infty \nul (y)dy \leq c(\mathfrak{c},f)\exp\big(-\frac{a}{b+1}(f(x))^{b+1}\big),
\end{equation}
for some $c(\mathfrak{c},f)>0$. Let 
\begin{equation} \label{def d}
d(\mathfrak{c},f)>f^{-1}\big((\frac{b+1}{a} \log (c(\mathfrak{c},f)))^{1/(b+1)} \big).
\end{equation}
By the definition of $\nul$ and using \eqref{gmM}, for every $y\in (0,\infty)$, we now have
\begin{align*}
\nu_\lambda(y) 
& =  \int_{0}^\infty \nul(x)\lambda (f^{-1}(y))e^{-\int_{f(x)}^{y}\lambda (f^{-1}(s))g_{x}(s)ds}g_{x}(y)\nbOne_{\{f(x)\leq y\}}dx\\& \geq  e^{-\int_{0}^{y}\lambda (f^{-1}(s))M(s)ds}\lambda (f^{-1}(y))m(y)\int_{0}^{f^{-1}(y)} \nul(x)dx\\& \geq  e^{-\int_{0}^{y}\lambda (f^{-1}(s))M(s)ds}\lambda (f^{-1}(y))m(y)\big( 1-\int_{f^{-1}(y)}^{\infty} \nul(x)dx\big)\\& \geq  e^{-\int_{0}^{y}\lambda (f^{-1}(s))M(s)ds}\lambda (f^{-1}(y))m(y)\left( 1-c(\mathfrak{c},f)\exp\left(-\frac{a}{b+1}(f(y))^{b+1}\right)\right)
\end{align*}
where we used \eqref{ineg qui tue} for the last inequality. By \eqref{def d}, for $y \geq d(\mathfrak{c},f)$ we have 
$$\left(1-c(\mathfrak{c},f) \exp\left(-\frac{a}{b+1}(f(y))^{b+1}\right)\right)>0,$$ and the conclusion  follows readily by the  definition of  $\varphi_\lambda$.
\end{proof}

For every $y\in (0,\infty)$, define
\begin{equation} \label{def D}
D(y) = \E_{\nul}\big[g_{Z_{0}}(f(y))\nbOne_{\{Z_{1}\geq f(y),\,f(y) \geq f(Z_{0})\}}\big],
\end{equation}
\begin{equation} \label{def D_n sansw}
D_n(y) = n^{-1}\sum_{k=1}^{n}  g_{Z_{k-1}}(f(y))\nbOne_{\{Z_{k}\geq f(y),\,f(y) \geq f(Z_{k-1})\}},
\end{equation}
and
\begin{equation} \label{def D_n}
D_n(y)_{\varpi_n} = \left(n^{-1}\sum_{k=1}^{n}  g_{Z_{k-1}}(f(y))\nbOne_{\{Z_{k}\geq f(y),\,f(y) \geq f(Z_{k-1})\}}\right)\bigvee \varpi_n.
\end{equation}
\begin{prop} \label{vitessed}
Work under Assumptions \ref{contrainte constante} and  \ref{hypopdmp}. Let $\mu$ be a probability measure  on $\R^+$ such that $\int_{\R^+}{\mathbb V}(\boldsymbol{x})^2\mu(d\boldsymbol{x})<\infty$.
If $1 \geq \varpi_n\rightarrow 0$ as $n \rightarrow \infty$, we have
\begin{equation} \label{convergence D_n}
\sup_{y\in {\mathcal D}}\E_{\mu}\big[\big(D_n(y)_{\varpi_n}-D(y)\big)^2\big] \lesssim n^{-1}
\end{equation}
uniformly in $\lambda$ such that \[ (\lambda,f, \phi )\in {\mathcal F}(\mathfrak{c},m,M),  \:\; \text{and}\:\; \lambda \in  {\mathcal H}^s({f(\mathcal D}), M_{1}),\]
\end{prop}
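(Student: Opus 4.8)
The plan is to decompose
\[
\E_\mu\big[(D_n(y)_{\varpi_n}-D(y))^2\big] \lesssim \E_\mu\big[(D_n(y)_{\varpi_n}-D_n(y))^2\big] + \E_\mu\big[(D_n(y)-D(y))^2\big],
\]
and bound the two terms separately. For the first (truncation) term, note that $|D_n(y)_{\varpi_n}-D_n(y)| = (\varpi_n - D_n(y))_+ \le \varpi_n \wedge |D_n(y)-D(y)| + (\varpi_n - D(y))_+$; since by Lemma \ref{minoration mes inv} together with formula \eqref{formulel} (which gives $\lambda(y)D(y)=\nul(f(y))$, hence $D(y)=\nul(f(y))/\lambda(y)$) one has $\inf_{\lambda}\inf_{y\in{\mathcal D}}D(y)>0$ — here I would invoke the lower bound on $\varphi_\lambda(x)^{-1}\nul(x)$, the upper bound $\|\lambda\|_{L^\infty({\mathcal D})}\le M_1$, and the relation between $\varphi_\lambda$ and $\lambda$ — the term $(\varpi_n-D(y))_+$ vanishes for $n$ large, uniformly in $y\in{\mathcal D}$ and in $\lambda$, so the truncation term is dominated by $\E_\mu[(D_n(y)-D(y))^2]$ up to constants.

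For the main term I would write $D_n(y)-D(y) = \big(D_n(y)-\E_\mu[D_n(y)]\big) + \big(\E_\mu[D_n(y)]-D(y)\big)$, i.e. a stochastic (variance) part and a bias part. For the bias, set $\psi_y(x,x')=g_x(f(y))\nbOne_{\{x'\ge f(y),\, f(y)\ge f(x)\}}$; then $\E_\mu[D_n(y)] = n^{-1}\sum_{k=1}^n \E_\mu[\psi_y(Z_{k-1},Z_k)]$ and $D(y)=\E_{\nul}[\psi_y(Z_0,Z_1)]$. Each summand is $\E_\mu[h_y(Z_{k-1})]$ where $h_y(x)=\pl\tilde\psi_y(x)$ with $\tilde\psi_y(x)=g_x(f(y))\E[\nbOne_{\{Z_1\ge f(y)\}}\mid Z_0=x]\nbOne_{\{f(y)\ge f(x)\}}$, and one checks $|h_y(x)|\lesssim {\mathbb V}(x)$ using \eqref{gmM} (so $g_x(f(y))\le M(f(y))$, bounded on ${\mathcal D}$) and the boundedness of the indicators. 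Then the geometric ergodicity bound \eqref{contraction} gives $|\E_\mu[h_y(Z_{k-1})]-D(y)| \le R\gamma^{k-1}\int {\mathbb V}(x)\mu(dx)$, so the Cesàro average differs from $D(y)$ by $O(n^{-1})$, uniformly in $y$ and $\lambda$; squaring gives $O(n^{-2})$, which is negligible. For the variance part, $\mathrm{Var}_\mu\big(n^{-1}\sum_{k=1}^n\psi_y(Z_{k-1},Z_k)\big) = n^{-2}\sum_{k,l}\mathrm{Cov}_\mu(\psi_y(Z_{k-1},Z_k),\psi_y(Z_{l-1},Z_l))$; by the Markov property and \eqref{contraction} each covariance with $|k-l|=j$ is bounded by $C\gamma^{|j|-1}\int{\mathbb V}^2 d\mu$ (this is where the hypothesis $\int{\mathbb V}^2\mu(d{\boldsymbol x})<\infty$ enters, to control the second moment of the summands), so the double sum is $O(n)$ and the variance is $O(n^{-1})$, again uniformly. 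Collecting the pieces yields \eqref{convergence D_n}.

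The main obstacle is making every estimate genuinely uniform over the class ${\mathcal F}(\mathfrak{c},m,M)\cap\{\lambda\in{\mathcal H}^s(f({\mathcal D}),M_1)\}$ and over $y\in{\mathcal D}$ simultaneously: one must check that the constants $R,\gamma$ from Proposition \ref{prop transition}, the lower bound on $D(y)$ from Lemma \ref{minoration mes inv}, the sup of $g_x(f(y))$ on the compact ${\mathcal D}$ via \eqref{gmM}, and the threshold $n$ beyond which $\varpi_n<\inf_{y}D(y)$ can all be chosen depending only on $\mathfrak{c},m,M,M_1$ and ${\mathcal D}$ and not on the particular triple $(\lambda,f,\phi)$. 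The covariance-decay argument also needs a clean statement of the form "for a Markov chain satisfying \eqref{contraction}, functionals of consecutive pairs have geometrically decaying covariances with constant controlled by the ${\mathbb V}^2$-norm," which I would isolate as a short lemma before assembling the bound.
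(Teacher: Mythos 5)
Your proposal is correct and follows essentially the same route as the paper: the truncation by $\varpi_n$ is absorbed using the uniform lower bound $\inf_{y\in\mathcal D}D(y)>0$ (the paper isolates this as Lemma \ref{minoration D} and uses an indicator plus Chebyshev where you use a pointwise inequality, but both reduce matters to $\E_\mu[(D_n(y)-D(y))^2]$), and that quantity is then controlled by the geometric ergodicity bound \eqref{contraction} together with the boundedness of $G(x,z,y)=g_x(f(y))\nbOne_{\{z\ge f(y),\,f(y)\ge f(x)\}}$ and the drift estimate $\pl^{j}{\mathbb V}\lesssim 1+{\mathbb V}$. The only cosmetic difference is that you split bias and variance separately, whereas the paper expands the square directly as a double sum of cross-terms centered at $\E_{\nul}$; both yield the $O(n^{-1})$ rate uniformly over the class.
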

We first need the following estimate
\begin{lemma} \label{minoration D}
Work under Assumptions \ref{contrainte constante} and \ref{hypopdmp}.
Let $d(\mathfrak{c},f)$ be defined as in Lemma \ref{minoration mes inv}. For every compact interval ${\mathcal D} \subset (d(\mathfrak{c},f),\infty)$ such that $\inf {\mathcal D} \leq f(r)$, we have
$$
\inf_{ \lambda : \: (\lambda,f, \phi )\in {\mathcal F}(\mathfrak{c},m,M) \:\; \text{and}\:\; \lambda \in  {\mathcal H}^s(f({\mathcal D}), M_{1})}\inf_{y\in {\mathcal D}}D(y) > 0.
$$
\end{lemma}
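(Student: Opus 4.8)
The plan is to bound $D(y)$ from below by a product of three factors, each of which is controlled uniformly in $\lambda$ over the admissible class and uniformly in $y\in{\mathcal D}$. First, by \eqref{gmM} we have $g_{Z_0}(f(y))\geq m(f(y))$, and since $f$ is increasing $\{f(Z_0)\leq f(y)\}=\{Z_0\leq y\}$, so
\[
D(y)\;\geq\; m(f(y))\,\E_{\nul}\!\big[\nbOne_{\{Z_0\leq y\}}\nbOne_{\{Z_1\geq f(y)\}}\big]\;=\;m(f(y))\int_0^{y}\nul(x)\,\P\big(Z_1\geq f(y)\mid Z_0=x\big)\,dx .
\]
For $x\leq y$, integrating the transition density \eqref{density explicit} over $[f(y),\infty)$ (the mass escaping to infinity vanishes, by \eqref{gmM} and \eqref{lambdainf}) gives $\P(Z_1\geq f(y)\mid Z_0=x)=\exp\big(-\!\int_{f(x)}^{f(y)}\lambda(f^{-1}(u))g_x(u)\,du\big)$, which, by $g_x\leq M$ and by enlarging the domain of integration to $[0,f(y)]$, is at least $\exp\big(-\!\int_{0}^{f(y)}\lambda(f^{-1}(u))M(u)\,du\big)$, a quantity not depending on $x$; pulling it out of the integral,
\[
D(y)\;\geq\; m(f(y))\;\exp\Big(-\!\int_{0}^{f(y)}\lambda(f^{-1}(u))M(u)\,du\Big)\;\nul\big([0,y]\big).
\]

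Next, I would bound the three factors below, uniformly in $\lambda$ and in $y\in{\mathcal D}$. The factor $m(f(y))$ is bounded below by a positive constant depending only on $m$ and ${\mathcal D}$, since $f({\mathcal D})$ is a fixed compact interval. For the exponent, split the integral at $f(r)$: the part over $[0,f(r)]$ is at most $L$ by \eqref{lambdaL}, while the part over $[f(r),f(y)]\subseteq[f(r),f(\sup{\mathcal D})]$ is bounded above uniformly in $\lambda$ because the H\"older assumption bounds $\|\lambda\|_{L^\infty}$ by $M_1$ on a compact containing $[r,\sup{\mathcal D}]$ — this is where the hypothesis $\inf{\mathcal D}\leq f(r)$ enters, ensuring that the points $f^{-1}(u)$, $u\in[f(r),f(\sup{\mathcal D})]$, that is the interval $[r,\sup{\mathcal D}]$, are covered. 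Hence $\int_{0}^{f(y)}\lambda(f^{-1}(u))M(u)\,du\leq\Gamma$ for a finite $\Gamma=\Gamma(\mathfrak{c},M_1,{\mathcal D})$ independent of $\lambda$. Finally, ${\mathbb V}$ is nondecreasing and, by Proposition \ref{prop transition} (this is \eqref{integ}), $\sup_\lambda\int_{[0,\infty)}{\mathbb V}(x)\nul(x)\,dx\leq c(\mathfrak c,f)<\infty$; Markov's inequality gives $\nul\big((s,\infty)\big)\leq c(\mathfrak c,f)/{\mathbb V}(s)$ for every $s>0$, so
\[
\nul\big([0,y]\big)\;\geq\;\nul\big([0,\inf{\mathcal D}]\big)\;\geq\;1-\frac{c(\mathfrak c,f)}{{\mathbb V}(\inf{\mathcal D})}\;>\;0 ,
\]
the strict positivity being the inequality ${\mathbb V}(\inf{\mathcal D})>c(\mathfrak c,f)$, which follows from $\inf{\mathcal D}>d(\mathfrak c,f)$ and the definition \eqref{def d} of $d(\mathfrak c,f)$ (the constant occurring there is precisely this same $c(\mathfrak c,f)$). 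Multiplying the three lower bounds gives $\inf_\lambda\inf_{y\in{\mathcal D}}D(y)>0$, as claimed.

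The density computation and the splitting of the integral at $f(r)$ are routine; the hard part will be keeping $\int_{0}^{f(y)}\lambda(f^{-1}(u))M(u)\,du$ bounded \emph{uniformly in $\lambda$}, since a priori $\lambda$ may be arbitrarily large on $[r,\sup{\mathcal D}]$ — this is exactly what \eqref{lambdaL} controls near the origin and what the $L^\infty$-part of the H\"older norm controls on the remaining compact. An essentially equivalent alternative is to start from the identity $D(y)=\nul(f(y))/\lambda(y)$ furnished by \eqref{formulel}, bound $\nul(f(y))$ below via Lemma \ref{minoration mes inv} and $\lambda(y)$ above via the $L^\infty$-bound; after unwinding $\varphi_\lambda$ the two arguments coincide, but I would prefer the direct one since it only needs $\inf{\mathcal D}>d(\mathfrak c,f)$, whereas placing $f({\mathcal D})$ inside the domain of Lemma \ref{minoration mes inv} would require the stronger $f(\inf{\mathcal D})>d(\mathfrak c,f)$.
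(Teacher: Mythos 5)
Your proof is correct, but it takes a genuinely different route from the paper's. The paper starts from the identity $D(y)=\nu_\lambda(f(y))/\lambda(y)$ given by \eqref{formulel}, rewrites $\lambda(y)$ via $\varphi_\lambda(f(y))$, bounds the exponential factor exactly as you do, and then invokes Lemma \ref{minoration mes inv} to lower-bound $\varphi_\lambda(f(y))^{-1}\nu_\lambda(f(y))$. You instead lower-bound the defining expectation directly, $D(y)\geq m(f(y))\,e^{-\int_0^{f(y)}\lambda(f^{-1})M}\,\nu_\lambda([0,y])$, and control $\nu_\lambda([0,y])$ by a Markov inequality against the Lyapunov function ${\mathbb V}$ — which amounts to re-deriving the tail estimate \eqref{ineg qui tue} rather than citing the conclusion of Lemma \ref{minoration mes inv}. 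Your closing remark identifies a real advantage of the direct route: the paper's one-line appeal to Lemma \ref{minoration mes inv} is actually applied at the points $f(y)$, $y\in{\mathcal D}$, so strictly it requires $f({\mathcal D})\subset(d(\mathfrak c,f),\infty)$, a stronger hypothesis than the stated ${\mathcal D}\subset(d(\mathfrak c,f),\infty)$ (since $f(y)<y$); your argument only needs $\inf{\mathcal D}>d(\mathfrak c,f)$ and so quietly repairs this. The one step where you are no more careful than the paper is the uniform bound on $\int_{f(r)}^{f(y)}\lambda(f^{-1}(u))M(u)\,du$: this needs $\|\lambda\|_{L^\infty}\leq M_1$ on $[r,\sup{\mathcal D}]$ (or at least on $f^{-1}([f(r),f(\sup{\mathcal D})])$), whereas the stated H\"older ball only controls $\lambda$ on $f({\mathcal D})$, which, because $f(x)\leq\kappa x<x$, does not contain that interval; the paper commits exactly the same elision, so this is an inherited imprecision rather than a gap you introduced.
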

\begin{proof}[Proof ] 
By \eqref{formulel} and the definition of $\varphi_B$ in \eqref{varl}, we readily have that 
$$
D(y) = \frac{\nul (f(y))}{\lambda (y)}= \frac{\nul (f(y))}{\varphi_{\lambda}(f (y))}e^{-\int_{0}^{f(y)}\lambda (f^{-1}(s))M(s)ds}m(y).
$$
Since \[ (\lambda,f, \phi )\in {\mathcal F}(\mathfrak{c},m,M) \:\; \text{and}\:\; \lambda \in {\mathcal H}^s({f(\mathcal D}), M_{1}),\] by applying \eqref{lambdaL}, we obtain
\begin{align*}
\int_{0}^{f(y)}\lambda (f^{-1}(s))M(s)ds&\leq \int_{0}^{\sup {\mathcal D}}\lambda (f^{-1}(s))M(s)ds \\& \leq L+\int_{f(r)}^{\sup \mathcal{D}}\lambda (f^{-1}(s))\sup_{y\in {\mathcal D}}|M(y)|ds 
\\& \leq \left(L + \sup_{y\in {\mathcal D}}|M(y)| M_{1}\sup {\mathcal D}\right) < \infty,
\end{align*} 
where we used that $\inf {\mathcal D} \leq f(r)$. It follows that
$$
\inf_{y\in {\mathcal D}}\exp\left(-\int_{0}^{f(y)}\lambda (f^{-1}(s))M(s)ds\right) \geq \exp\left(- (L + \sup_{y\in {\mathcal D}}|M(y)|M_{1}\sup {\mathcal D}) \right)>0
$$
and Lemma \ref{minoration D} follows by applying Lemma \ref{minoration mes inv}.
\end{proof}
\begin{proof}[Proof of Proposition \ref{vitessed}]
Since $D_n(y)$ is bounded by $M$, we have
\begin{equation} \label{take off varpi}
\big(D_n(y)_{\varpi_n}-D(y)\big)^2\lesssim \big(D_n(y)-D(y)\big)^2+{\bf 1}_{\{D_n(y) < \varpi_n\}}.
\end{equation}
Thus, by Lemma \ref{minoration D}  we may choose $n$ sufficiently large  that 
$$
0 < \varpi_n \leq q=\tfrac{1}{2}\inf_{\lambda : \: (\lambda,f, \phi )\in {\mathcal F}(\mathfrak{c},m,M) \:\; \text{and}\:\; \lambda \in   {\mathcal H}^s(f({\mathcal D}), M_{1})}\inf_{y \in {\mathcal D}}D(y).
$$

Since 
$$\{D_n(y) < \varpi_n\}\subset \{D_n(y)-D(y)< -q\},$$
by integrating \eqref{take off varpi}, we have that $\E_{\mu}\big[\big(D_n(y)_{\varpi_n}-D(y)\big)^2\big]$ is less than a constant times
\begin{align*}
 \E_{\mu}\big[\big(D_n(y)-D(y)\big)^2\big] + \PP_{\mu}\big(|D_n(y)-D(y)| \geq q\big).
\end{align*}
By the Bienaymé-Tchebychev inequality, this quantity is less than a constant times \[\E_{\mu}\big[\big(D_n(y)-D(y)\big)^2\big].\]
Set $G(x,z,y) =g_{x}(f(y))\nbOne_{\{z\geq f(y),\,f(y) \geq f(x)\}}$ and note that $G(\cdot , \cdot , \cdot )$ is bounded on $\R^+$ by $\sup_{y\in {\mathcal D} } |M(y)|$.
It follows that
\begin{align*}
D_n(y) -D(y) & = n^{-1}\sum_{k=1}^{n}\Big( G(Z_{k-1},Z_{k},y)-\E_{\nul}\big[G(Z_{k-1},Z_{k},y)\big]\Big). 
\end{align*}
Therefore, 
\begin{align}\label{d2}
 \E_{\mu}\big[\big(D_n(y)-D(y)\big)^2\big] =\;\frac{1}{n^{2}}\sum_{k,k^{'} \in\{1,..,n\}}
  \E_{\mu}\big[&\big(G(Z_{k-1},Z_{k},y)-\E_{\nul}\big[G(Z_{k-1},Z_{k},y)\big]\big)\nonumber\\&
 \big(G(Z_{k^{'}-1},Z_{k^{'}},y)-\E_{\nul}\big[G(Z_{k^{'}-1},Z_{k^{'}},y)
 \big]\big)\big] .
\end{align}
For $|k-k^{'}|\geq 2$, applying  Markov's property, we get that

 \begin{align*}&\mathbb{E}_{\mu} \big[\big(G(Z_{k-1},Z_{k},y)-\E_{\nul}\big[G(Z_{k-1},Z_{k},y)\big]\big)\\&
 \big(G(Z_{k^{'}-1},Z_{k^{'}},y)-\E_{\nul}\big[G(Z_{k^{'}-1},Z_{k^{'}},y)
 \big]\big)| Z_{i} \forall i\leq  k\wedge k^{'}\big]\\&= \int\int \pl^{k\vee k^{'}- k\wedge k^{'}} (Z_{k\wedge k^{'}},dz)\pl (z,dz^{'})\big(G(z,z^{'},y)-\E_{\nul}\big[G(Z_{0},Z_{1},y)\big]\big)\\&\:\;\:\;\:\;\:\;\:\;\:\;\:\;\big[G(Z_{k\wedge k^{'}-1},Z_{k\wedge k^{'}},y)-\E_{\nul}\big[G(Z_{0},Z_{1},y)
 \big]\big],
\end{align*}
with  $k\wedge k^{'}=\min\{k, k^{'}\}$.

Applying Proposition \ref{prop transition} with $h(z)=\int \pl (z,dz^{'})G(z,z^{'},y)$, we get

$$\begin{disarray}{ll}
&\E_{\mu}\big[\big(G(Z_{k-1},Z_{k},y)-\E_{\nul}\big[G(Z_{k-1},Z_{k},y)\big]\big)
 \big(G(Z_{k^{'}-1},Z_{k^{'}},y)-\E_{\nul}\big[G(Z_{k^{'}-1},Z_{k^{'}},y)
 \big]\big)\big]  \\&
\leq \; R \E_{\mu}\big[{\mathbb V}(Z_{k\wedge k^{'}})\big(G(Z_{k\wedge k^{'}-1},Z_{k\wedge k^{'}},y)-\E_{\nul}\big[G(Z_{0},Z_{1},y)
 \big]\big)\big]\gamma^{k\vee k^{'}- k\wedge k^{'}}
\\&\lesssim \int_{E}\pl^{ k\wedge k^{'}}{\mathbb V}(x)\mu(dx)\, \gamma^{k\vee k^{'}- k\wedge k^{'}},
\end{disarray}$$
as the function $G$ is bounded by $\sup_{y\in {\mathcal D}}|M(y)|$.

For $|k-k^{'}|=1$, we suppose for example that $k^{'}=k-1$.  Applying the Markov property, we get that

\begin{align*}
&\mathbb{E}_{\mu} \big[\big(G(Z_{k-1},Z_{k},y)-\E_{\nul}\big[G(Z_{k-1},Z_{k},y)\big]\big)\\&
 \:\;\:\;\:\;\:\big(G(Z_{k^{'}-1},Z_{k^{'}},y)-\E_{\nul}\big[G(Z_{k^{'}-1},Z_{k^{'}},y)
 \big]\big)| Z_{i}\:\forall i\leq   k-1\big]\\&= \int \pl (Z_{k-1},dz)\big(G(Z_{k-1} ,z,y)-\E_{\nul}\big[G(Z_{k^{'}-1},Z_{k^{'}},y)\big]\big)\\&\:\;\:\;\:\;\:\;\:\;\big[G(Z_{k-2},Z_{k-1},y)-\E_{\nul}\big[G(Z_{0},Z_{1},y)
 \big]\big].
\end{align*}

Applying  Proposition \ref{prop transition} again, we get

$$\begin{disarray}{ll}
&\E_{\mu}\big[\big(G(Z_{k-1},Z_{k},y)-\E_{\nul}\big[G(Z_{k-1},Z_{k},y)\big]\big)
 \big(G(Z_{k^{'}-1},Z_{k^{'}},y)-\E_{\nul}\big[G(Z_{k^{'}-1},Z_{k^{'}},y)
 \big]\big)\big]  \\&
\leq \; R \E_{\mu}\big[{\mathbb V}(Z_{k\wedge k^{'}})\big(G(Z_{k-2},Z_{k-1},y)-\E_{\nul}\big[G(Z_{0},Z_{1},y)
 \big]\big)\big]\gamma^{k\vee k^{'}- k\wedge k^{'}}
\\&\lesssim \int_{E}\pl^{ k\wedge k^{'}}{\mathbb V}(x)\mu(dx)\, \gamma^{k\vee k^{'}- k\wedge k^{'}},
\end{disarray}$$
as the function $G$ is bounded by $\sup_{y\in {\mathcal D}}|M(y)|$.

For $k=k^{'}$,   

\begin{align*}
&\mathbb{E}_{\mu} \big[\big(G(Z_{k-1},Z_{k},y)-\E_{\nul}\big[G(Z_{k-1},Z_{k},y)\big]\big)
 ^2| Z_{i}\:\forall i\leq   k-1\big]\lesssim  \gamma^{k\vee k^{'}- k\wedge k^{'}},
\end{align*}
as the function $G$ is bounded by $\sup_{y\in {\mathcal D}}|M(y)|$.

Moreover as  ${\mathbb V}$ satisfies \eqref{drift condition}, we get
\begin{equation} \label{majoration V deux}
\sup_{\lambda : \: (\lambda,f, \phi )\in {\mathcal F}(\mathfrak{c},m,M)}\pl^{k\wedge k^{'}}{\mathbb V}(x) \lesssim 1+{\mathbb V}(x)
\end{equation}
and, thus, for any $k$ and $k^{'}$,
$$\begin{disarray}{ll}
&\E_{\mu}\big[\big(G(Z_{k-1},Z_{k},y)-\E_{\nul}\big[G(Z_{k-1},Z_{k},y)\big]\big)
 \big(G(Z_{k^{'}-1},Z_{k^{'}},y)-\E_{\nul}\big[G(Z_{k^{'}-1},Z_{k^{'}},y)
 \big]\big)\big]  
\\&\lesssim  \int_{\R_+ }\big(1+{\mathbb V}(x)\big)\mu(dx)\, \gamma^{k\vee k^{'}- k\wedge k^{'}}.
\end{disarray}$$

Since ${\mathbb V}$ is $\mu$-integrable by assumption, thanks to (\ref{d2}), 
we have 
\begin{equation}\label{d22}
 \E_{\mu}\big[\big(D_n(y)-D(y)\big)^2\big] 
 \lesssim\;\frac{1}{n^{2}}\sum_{k,k^{'} \in\{1,2,..,n\}} \gamma^{k\vee k^{'}- k\wedge k^{'}}\lesssim n^{-1},
\end{equation}
uniformly in $y\in {\mathcal D}$ and $\lambda$ such that $ (\lambda,f, \phi )\in {\mathcal F}(\mathfrak{c},m,M)$. 

\end{proof}
\begin{prop}\label{moment lemma bis}
Work under Assumptions \ref{contrainte constante},   \ref{prop K} and  \ref{hypopdmp}. Let $\mu$ be a probability measure  on $\R^+$ such that $\int_{\R^+}{\mathbb V}(\boldsymbol{x})^2\mu(d\boldsymbol{x})<\infty$.
Then we have
\begin{equation} \label{moment mes inv}
\sup_{y \in {\mathcal D}}\E_{\mu}\big[\big(K_{h_n}\star \widehat \nu_n(y)-K_{h_n} \star \nul(y)\big)^2\big] \lesssim {(nh_n)}^{-1}
\end{equation}
uniformly in $\lambda$ such that $(\lambda,f, \phi )\in {\mathcal F}(\mathfrak{c},m,M)$ and $\|f\|_{L^\infty({\mathcal D})} \leq M_2$ with 
$\widehat \nu_n ( \cdot ) =\frac{1}{n}\sum_{k\in\{1,...,n\}}\nbOne_{\{ Z_k\}} ( \cdot  ) $
\end{prop}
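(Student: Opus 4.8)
The plan is to reduce the estimate to a variance bound for a partial sum along the geometrically ergodic chain $(Z_k)$, exactly as in the proof of Proposition \ref{vitessed}, the only new ingredient being that the relevant test function is the ``spiky'' kernel $\psi_n(z):=K_{h_n}(y-z)$, whose sup-norm is of order $h_n^{-1}$. First I would rewrite
\[
K_{h_n}\star\widehat\nu_n(y)-K_{h_n}\star\nul(y)=n^{-1}\sum_{k=1}^n\big(\psi_n(Z_k)-\nul\psi_n\big),\qquad \nul\psi_n:=\int_{\R^+}\psi_n(z)\nul(z)\,dz .
\]
Since $K\ge 0$ has compact support, $\psi_n\ge 0$, $\int_{\R^+}\psi_n=1$, $\|\psi_n\|_\infty\lesssim h_n^{-1}$, and for $n$ large $\psi_n$ is supported in a fixed compact neighbourhood $\widetilde{\mathcal D}$ of ${\mathcal D}$; moreover, because ${\mathbb V}\ge 1$, the weighted norm entering \eqref{contraction} satisfies $\|\psi_n\|_{\mathbb V}:=\sup_x|\psi_n(x)|/{\mathbb V}(x)\le\|\psi_n\|_\infty\lesssim h_n^{-1}$, so $\psi_n$ is admissible in \eqref{contraction} up to the factor $h_n^{-1}$. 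I would also collect the uniform-in-$\lambda$ facts that will be needed: arguing as in the proof of Lemma \ref{minoration mes inv} from the explicit formula \eqref{density explicit} and \eqref{gmM}, $\nul$ is bounded on $\widetilde{\mathcal D}$, hence $\nul\psi_n\lesssim 1$; using $\|f\|_{L^\infty({\mathcal D})}\le M_2$, ${\mathbb V}$ is bounded on $\widetilde{\mathcal D}$, hence $\nul({\mathbb V}\psi_n)\lesssim 1$; and, iterating the drift inequality \eqref{drift condition} as in \eqref{majoration V deux}, $\sup_{k\ge0}\sup_\lambda\E_\mu[{\mathbb V}(Z_k)]=\sup_{k\ge0}\sup_\lambda\mu\pl^k{\mathbb V}\lesssim 1$ because $\int_{\R^+}{\mathbb V}(x)\mu(dx)<\infty$.

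Next I would expand the square and split the double sum $n^{-2}\sum_{k,k'}\E_\mu[(\psi_n(Z_k)-\nul\psi_n)(\psi_n(Z_{k'})-\nul\psi_n)]$ into diagonal and off-diagonal parts. Applying \eqref{contraction} to $\psi_n$ and integrating against $\mu$ gives $\mu\pl^k\psi_n\le\nul\psi_n+R\|\psi_n\|_\infty\gamma^k\int_{\R^+}{\mathbb V}(x)\mu(dx)\lesssim 1+\gamma^kh_n^{-1}$. For the diagonal, bounding $\E_\mu[(\psi_n(Z_k)-\nul\psi_n)^2]\le\|\psi_n\|_\infty\,\mu\pl^k\psi_n+(\nul\psi_n)^2$ produces a term of size $h_n^{-1}+\gamma^kh_n^{-2}+1$; summing over $k\le n$ and dividing by $n^2$ yields $(nh_n)^{-1}+(nh_n)^{-2}+n^{-1}\lesssim(nh_n)^{-1}$, using $nh_n\to\infty$ for $h_n=c_0n^{-1/(2s+1)}$. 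For the off-diagonal terms, say $k>k'$ (the case $k<k'$ being symmetric), I would condition on $Z_0,\dots,Z_{k'}$, use the Markov property so that the inner conditional expectation of $\psi_n(Z_k)-\nul\psi_n$ equals $\pl^{k-k'}\psi_n(Z_{k'})-\nul\psi_n$, and bound this by $R\|\psi_n\|_\infty{\mathbb V}(Z_{k'})\gamma^{k-k'}$ via \eqref{contraction}; then $\E_\mu[{\mathbb V}(Z_{k'})(\psi_n(Z_{k'})+\nul\psi_n)]\lesssim 1+\gamma^{k'}h_n^{-1}$ by the same device (noting ${\mathbb V}\psi_n\le\|{\mathbb V}\|_{L^\infty(\widetilde{\mathcal D})}\psi_n$), so the $(k,k')$ term is $\lesssim h_n^{-1}\gamma^{k-k'}+h_n^{-2}\gamma^{k}$. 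Summing, $\sum_{k'<k\le n}\gamma^{k-k'}\lesssim n$ and $\sum_{k'<k\le n}\gamma^k\le\sum_{k\ge1}k\gamma^k\lesssim1$, so the off-diagonal part divided by $n^2$ is $\lesssim(nh_n)^{-1}+(nh_n)^{-2}\lesssim(nh_n)^{-1}$. Adding the two estimates gives \eqref{moment mes inv}, uniformly over the stated class because every constant ($R$, $\gamma$, the drift constants, the $L^\infty(\widetilde{\mathcal D})$ bounds on $\nul$ and ${\mathbb V}$) is uniform.

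I expect the main difficulty to be bookkeeping rather than conceptual: one must check that the factor $h_n^{-1}=\|\psi_n\|_{\mathbb V}$ lost each time the spiky function $\psi_n$ is inserted into the geometric-ergodicity bound \eqref{contraction} is always multiplied by a summable geometric weight $\gamma^{(\cdot)}$, so that the resulting ``bad'' contributions are of order $(nh_n)^{-2}$ or $n^{-1}$ and are dominated by the true variance rate $(nh_n)^{-1}$; this is precisely where the choice $h_n=c_0n^{-1/(2s+1)}$, equivalently $nh_n\to\infty$, is used. The one point requiring genuine care beyond routine estimates is the uniform-in-$\lambda$ local boundedness of $\nul$ on $\widetilde{\mathcal D}$, which is what upgrades $\nul\psi_n$ from the trivial $\lesssim h_n^{-1}$ to $\lesssim 1$; as in Lemma \ref{minoration mes inv}, it follows from the explicit density \eqref{density explicit} together with the structural constraints defining ${\mathcal F}(\mathfrak c,m,M)$.
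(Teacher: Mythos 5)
Your proposal is correct and follows essentially the same route as the paper: expand the variance as a double sum over $(k,k')$, apply the Markov property together with the geometric-ergodicity bound \eqref{contraction} to the kernel test function, and gain the crucial factor of $h_n$ from the local boundedness of $\nu_\lambda$ on a compact neighbourhood of ${\mathcal D}$ (which, exactly as in the paper's own \eqref{supmu}, tacitly uses the sup-norm bound on $\lambda$ over that compact, i.e.\ the H\"older-ball hypothesis not listed in the proposition's statement). The only difference is bookkeeping: you work with the normalized kernel $\psi_n=K_{h_n}(y-\cdot)$ and absorb the resulting $h_n^{-1}$ factors by a second application of \eqref{contraction} (producing harmless extra terms of order $(nh_n)^{-2}$ and $n^{-1}$), whereas the paper keeps the unnormalized kernel and bounds $\E_{\mu}\big[|J\,{\mathbb V}|(Z_{k\wedge k'})\big]\lesssim h_n$ by integrating directly against the transition density; both give $(nh_n)^{-1}$.
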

\begin{proof}
By definition,
\begin{align}
 \E_{\mu}\big[\big(K_{h_n}\star \widehat \nu_n(y)-K_{h_n} \star \nul(y)\big)^2\big] 
& =\, (nh_n)^{-2}\E_{\mu}\Big[\Big(\sum_{k\in\{1,...,n\}}K\big(\tfrac{Z_{k}-y}{h_n}\big)-\E_{\nu_\lambda }\big[K\big(\tfrac{Z_{0}-y}{h_n}\big)\big]\Big)^2\Big] \nonumber\\
 &=\, (nh_n)^{-2}\sum_{k,k^{'}\in\{1,...,n\}}\mathbb{E}_{\mu}\big[\widetilde K\big(\tfrac{Z_{k}-y}{h_n}\big) \widetilde K\big(\tfrac{Z_{k^{'}}-y}{h_n}\big)\big], \nonumber
\end{align}
with $\widetilde K(\tfrac{Z_{k}-y}{h_n}) =K\big(\tfrac{Z_{k}-y}{h_n}\big)-\E_{\nul}\big[K\big(\tfrac{Z_{0}-y}{h_n}\big)\big]$ .

As in the proof of Proposition \ref{vitessed},  thanks to the Markov property we obtain
\begin{align}
&\mathbb{E}_{\mu}\big[\mathbb{E}_{\mu}(\widetilde K\big(\tfrac{Z_{k}-y}{h_n}\big) \widetilde K\big(\tfrac{Z_{k^{'}}-y}{h_n}\big)\big| Z_{k\wedge k^{'}}\big] \nonumber\\
&= \; \big[\pl^{k\vee k^{'}- k\wedge k^{'}} J(Z_{k\wedge k^{'}})-\E_{\nul} (K\big(\tfrac{Z_{0}-y}{h_n}\big))\big]\big(  K(\tfrac{Z_{k\wedge k^{'}}-y}{h_n})-\E_{\nul} (K\big(\tfrac{Z_{0}-y}{h_n}\big))\big)\nonumber \label{decomp markov}
,
\end{align}
with $J(\cdot )=K(\frac{\cdot-y}{h_n})$.
First, as $K$ has bounded support, $K \lesssim {\mathbb V}$ and so we  can apply \eqref{contraction} from Proposition \ref{prop transition}. We obtain
\begin{equation} \label{premiere maj h}
\big|\big[\pl^{k\vee k^{'}- k\wedge k^{'}} J(Z_{k\wedge k^{'}})-\E_{\nul} (K\big(\tfrac{Z_{0}-y}{h_n}\big))\big]
\big| \leq R{\mathbb V}\big(Z_{k\wedge k^{'}}\big)\gamma^{k\vee k^{'}- k\wedge k^{'}}.
\end{equation}

Moreover, thanks to (\ref{formulel}) and (\ref{gmM}) and the fact that $\lambda\in\mathcal{H}^{s} (f(\mathcal{D}), M_{1})$, we have that 
\begin{equation}
 \sup_{x\in f(\mathcal{D})}\nul (x)\leq M_{1}M,\label{supmu}
\end{equation} 
so that
\begin{align}
\big|\E_{\nul} (K\big(\tfrac{Z_{0}-y}{h_n}\big)) \big|& \leq \int_{[0,\infty)}\big| K\big(\tfrac{x-y}{h_{n}}\big)\big|\nul (x)dx \lesssim h_{n}.
\label{esperance h}
\end{align}

Putting together \eqref{premiere maj h} and \eqref{esperance h}
we derive

\begin{align*}
&
\mathbb{E}_{\mu}\big[\mathbb{E}_{\mu}(\widetilde K\big(\tfrac{Z_{k}-y}{h_n}\big) \widetilde K\big(\tfrac{Z_{k^{'}}-y}{h_n}\big)\big| Z_{k\wedge k^{'}}\big)\big]
 \\&\lesssim \mathbb{E}_{\mu}(RJ(Z_{k\wedge k^{'}}){\mathbb V}\big(Z_{k\wedge k^{'}}\big))\gamma^{k\vee k^{'}- k\wedge k^{'}}
 +\mathbb{E}_{\mu}(R{\mathbb V}\big(Z_{k\wedge k^{'}}\big))\gamma^{k\vee k^{'}- k\wedge k^{'}}h_{n}.
\end{align*}

On the one hand, by using  the Markov property, the fact that  \[ e^{-\int_{f(Z_{k\wedge k^{'}-1})}^{u}\lambda (f^{-1}(s))g_{Z_{k\wedge k^{'}-1}}(s)ds}\leq 1,\] (\ref{gmM}),  that $\lambda\in\mathcal{H}^{s} (f(\mathcal{D}), M_{1})$ and as $f$ is increasing, we can bound ${\mathbb V}(u)\nbOne_{\{u\in\mathcal{D}\}}$ by $e^{\frac{a}{b+1} M_2^{b+1}}$, we get that
\begin{align*}
&\mathbb{E}_{\mu}\big[\big| J\big(Z_{k\wedge k^{'}}\big){\mathbb V}\big(Z_{k\wedge k^{'}}\big)\big| \big]\nonumber \\& \leq \mathbb{E}_{\mu} \Big|\int_{f(Z_{k\wedge k^{'}-1})}^\infty K\big(\tfrac{u-y}{h_n}\big)\lambda (f^{-1}(u))e^{-\int_{f(Z_{k\wedge k^{'}-1})}^{u}\lambda (f^{-1}(s))g_{Z_{k\wedge k^{'}-1}}(s)ds}g_{Z_{k\wedge k^{'}-1}}(u){\mathbb V}(u)du\Big| \nonumber  \\
& \leq \left(\sup_{x\in\mathcal{D}}|M(x)|\right) M_{1}\int_{[0,\infty)}\mathbb{E}_{\mu} \big|K\big(h_{n}^{-1}(u-y)\big)\big| du    \lesssim h_{n} \label{obtention h}
\end{align*}
as    $K$ has compact support.
On the other hand, as ${\mathbb V}^2$ and  ${\mathbb V}$ are $\mu$ integrable by assumption we get that

\begin{align*}
&
\mathbb{E}_{\mu}\big[\mathbb{E}_{\mu}(\widetilde K\big(\tfrac{Z_{k}-y}{h_n}\big) \widetilde K\big(\tfrac{Z_{k^{'}}-y}{h_n}\big)\big| Z_{k\wedge k^{'}}\big)\big]
\lesssim h_{n} \gamma^{k\vee k^{'}- k\wedge k^{'}}.
\end{align*}

Therefore,
\begin{align*}
& \E_{\mu}\big[\big(K_{h_n}\star \widehat \nu_n(y)-K_{h_n} \star \nul(y)\big)^2\big] 
\lesssim\, (nh_n)^{-2}\sum_{k,k^{'} \in\{1,...,n\}}h_{n} \gamma^{k\vee k^{'}- k\wedge k^{'}}
\lesssim\, (nh_n)^{-1}.
\end{align*}

\end{proof}

\subsection{ Proof of Theorem \ref{upper bound}}\label{4.3}

Recall that
$$\widehat \lambda_n (y) = \frac{n^{-1}\sum_{k=1}^{n}  K_{h_{n}}(Z_{k}-f(y))}{n^{-1}\sum_{k=1}^{n}  g_{Z_{k-1}}(f(y))\nbOne_{\{Z_{k}\geq f(y),\,f(y) \geq f(Z_{k-1})\}}\bigvee \varpi_n}$$
and
$$\lambda (y)=\frac{\nul (f(y))}{\E_{\nul}(g_{Z_0}(f(y)) \nbOne_{\{f(Z_0 )\leq f(y)\}} \nbOne_{\{Z_1 \geq f(y)\}})}.$$
We will use the  decomposition
$$\widehat \lambda_n(y)-\lambda (y) = (I+II+III),$$
where
\begin{align*}
I & =\frac{K_{h_n}\star \nu_\lambda (f(y))-\nu_\lambda (f(y))}{D(y)}, \\
II & =\frac{K_{h_n}\star \widehat \nu_n(f(y))-K_{h_n}\star \nu_\lambda (f(y))}{D_n(y)_{\varpi_n}},\\
III & =\frac{K_{h_n}\star \nu_\lambda (f(y))}{D_n(y)_{\varpi_n} D(y)}\big(D(y)-D_n(y)_{\varpi_n}\big), 
\end{align*}
and where $D(y)$ and $D_n(y)_{\varpi_n}$ are defined in \eqref{def D} and \eqref{def D_n} respectively.
It follows that
\begin{align*}
\|\widehat \lambda_n-\lambda\|_{L^2({\mathcal D})}^2  & = \int_{{\mathcal D}}\big(\widehat \lambda_n(y)-\lambda (y)\big)^2dy 
 \lesssim  IV+V+VI,
\end{align*}
where
\begin{align*}
IV & = \int_{{\mathcal D}}\big(K_{h_n}\star \nu_\lambda (f(y))-\nu_\lambda (f(y))\big)^2\tfrac{1}{D(y)^2}dy\\
V & =  \int_{{\mathcal D}}\big(K_{h_n}\star \widehat \nu_n(f(y))-K_{h_n}\star \nu_\lambda (f(y))\big)^2D_n(y)_{\varpi_n}^{-2}dy\\
VI & = \int_{{\mathcal D}} \big(D_n(y)_\varpi-D(y)\big)^2\big(K_{h_n}\star \nu_\lambda (f(y))\big)^2\big(D_n(y)_{\varpi_n} D(y)\big)^{-2}dy.
\end{align*}
\begin{proof}[The term IV] We get rid of the term $\tfrac{1}{D(y)^2}$ using Lemma \ref{minoration D}. By Assumption \ref{prop K} and classical kernel approximation, we have for every $0 < s \leq n_0$
\begin{equation} \label{biais mesure invariante}
IV \lesssim \|K_{h_n}\star \nu_\lambda -\nu_\lambda \big\|_{L^2(f({\mathcal D}))}^2 \lesssim |\nu_\lambda |_{{\mathcal H}^s(f({\mathcal D}))}^2h_n^{2s}.
\end{equation}
\begin{lemma} \label{reg transfer} We work under  Assumption \ref{hypopdmp}. Let ${\mathcal D}\subset (0,\infty)$ be a compact interval, $\lambda \in {\mathcal F}(\mathfrak{c}) $ and $(\lambda,f, \phi )\in {\mathcal F}(\mathfrak{c},m,M)$ for some ${\mathfrak{c}}$ satisfying Assumption \ref{contrainte constante}, $g_x \in {\mathcal H}^s({\mathcal D})$ and $f^{-1} \in{\mathcal H}^s({\mathcal D})$. Then we have
$$\|\nu_\lambda \|_{{\mathcal H}^s(\mathcal{D})} \leq \psi\big( {\mathcal D},\|\lambda\|_{{\mathcal H}^s({\mathcal D})},\|g_x \|_{{\mathcal H}^s({\mathcal D})},\|f^{-1} \|_{{\mathcal H}^s({\mathcal D})}\big)$$
for some continuous function $\psi$. 
\end{lemma}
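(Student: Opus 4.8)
The plan is to start from the stationarity identity $\nul=\nul\pl$ together with the explicit transition density \eqref{density explicit}. Since $\nul(dy)=\nul(y)\,dy$, this reads, for $w>0$,
\[
\nul(w)=\lambda\big(f^{-1}(w)\big)\int_{0}^{f^{-1}(w)}\nul(x)\,g_{x}(w)\exp\Big(-\int_{f(x)}^{w}\lambda(f^{-1}(s))g_{x}(s)\,ds\Big)\,dx ,
\]
which I would view as a fixed-point equation $\nul=A_\lambda[\nul]$. The crucial observation is that $A_\lambda$ is \emph{smoothing}: on the right-hand side the unknown $\nul$ enters only (i) through the integrand $\nul(x)$, where $x$ is the integration variable, and (ii) through the moving upper endpoint $f^{-1}(w)$. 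Consequently each differentiation in $w$ costs only regularity \emph{of the data} $\lambda\circ f^{-1}$, $g_{\cdot}$, $f^{-1}$, while the feedback of $\nul$ appears either integrated against the finite measure $\nul(x)\,dx$ (so only $\|\nul\|_{L^1}=1$ matters) or as a boundary term $\nul(f^{-1}(w))$ of strictly lower differentiation order. Note also that $g_x\le M$ and $\int\nul\le 1$ already give the a priori bound $\nul(w)\le\lambda(f^{-1}(w))M(w)$, so $\nul$ is bounded on every compact $K\subset(0,\infty)$ by a quantity controlled by $K$, by $M$ and by $\sup_{f^{-1}(K)}\lambda$.

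I would then prove, by induction on $j\in\{0,\dots,\lfloor s\rfloor\}$, that $\nul\in{\mathcal H}^{\min(j+\{s\},\,s)}(K)$ for every compact $K\subset(0,\infty)$, with norm bounded by a continuous function of $j$, of $K$, and of $\|\lambda\|_{{\mathcal H}^s}$, $\|g_x\|_{{\mathcal H}^s}$, $\|f^{-1}\|_{{\mathcal H}^s}$ on suitable fixed enlargements of $K$ under $f^{-1},\dots,f^{-j}$. The base case $j=0$ combines the a priori $L^\infty$ bound with one further use of the identity: $\nul$ is the product of $\lambda\circ f^{-1}$, which is $\{s\}$-Hölder by composition of Hölder functions, with the integral term, whose $w$-derivative $\nul(f^{-1}(w))g_{f^{-1}(w)}(w)(f^{-1})'(w)+\int_{0}^{f^{-1}(w)}\nul(x)\,\partial_w(\text{kernel})\,dx$ is bounded on $K$, so that term is Lipschitz, and a product of an ${\mathcal H}^{\{s\}}$ function and a Lipschitz function is ${\mathcal H}^{\{s\}}$. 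For the inductive step one differentiates the identity once and sorts $\nul'$ into three types of terms: products of $\lambda\circ f^{-1}$, $g_{\cdot}$, $f^{-1}$ and their derivatives, controlled by the data norms via the Leibniz and Fa\`a di Bruno formulas; integrals $\int_0^{f^{-1}(w)}\nul(x)\,\partial_w(\text{kernel})\,dx$, controlled since $\|\nul\|_{L^1}=1$ and the kernel is smooth in $w$ on compacts away from the origin, the exponent $\int_{f(x)}^{w}\lambda(f^{-1})g_x$ being finite and $C^1$ in $w$ by \eqref{lambdaL} and \eqref{gmM}; and boundary terms $\nul(f^{-1}(w))\times(\text{data})$, controlled by the inductive hypothesis on the compact $f^{-1}(K)$. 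This yields $\nul'\in{\mathcal H}^{\min(j+\{s\},\,s-1)}(K)$, i.e. $\nul\in{\mathcal H}^{\min(j+1+\{s\},\,s)}(K)$; taking $j=\lfloor s\rfloor$ and $K={\mathcal D}$ gives the claim, with $\psi$ the composition of the finitely many elementary estimates used along the way.

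The main obstacle is not a single hard estimate but the bookkeeping of the compact sets: because $f$ is a contraction ($f(x)\le\kappa x$), $f^{-1}$ pushes ${\mathcal D}$ to the right, so the $j$-th level of the induction genuinely lives on the fixed compact $K_j\subset(0,\infty)$ obtained from ${\mathcal D}$ by at most $j$ applications of $f^{-1}$, and one must check that $\lambda$, $g_{\cdot}$, $f^{-1}$ and the a priori bound on $\nul$ remain under control there --- which they do, by continuity and the structural assumptions defining ${\mathcal F}(\mathfrak{c},m,M)$, with \eqref{lambdaL} handling the only place where $\lambda$ may be unbounded, namely integrability of $\lambda\circ f^{-1}$ near $0$. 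The secondary, routine point is to make the Hölder estimates for the composition $\lambda\circ f^{-1}$ and for the integrating factor $w\mapsto\exp(-\int_{f(x)}^{w}\lambda(f^{-1}(s))g_x(s)\,ds)$ quantitative enough to enter $\psi$; since each $K_j$ is bounded away from $0$ and $\infty$, the relevant integrals stay finite with $w$-derivatives of the same regularity as their integrands, so this is the standard calculus of Hölder spaces (products, compositions, fundamental theorem of calculus for moving-endpoint integrals). When $g_x(\cdot)\equiv g(\cdot)$, as in the TCP and marked-bacteria examples, this bookkeeping is lighter since the subscript disappears from the boundary terms.
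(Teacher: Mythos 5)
Your proof follows the same route as the paper's: both start from the invariance identity $\nu_\lambda(y)=\lambda(f^{-1}(y))\int_0^{f^{-1}(y)}\nu_\lambda(x)\Lambda_\lambda(x,y)\,dx$ with $\Lambda_\lambda(x,y)=e^{-\int_{f(x)}^{y}\lambda(f^{-1}(s))g_x(s)ds}g_x(y)$, and transfer the H\"older regularity of the kernel, of $\lambda\circ f^{-1}$ and of the moving endpoint $f^{-1}$ to $\nu_\lambda$. Your induction on the derivative order, which tracks the boundary terms $\nu_\lambda(f^{-1}(w))$ on the enlarged compacts $f^{-j}({\mathcal D})$, is a legitimate and more explicit justification of the final step that the paper asserts without detail (``the result is then a consequence of the representation'').
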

\begin{proof}[Proof of Lemma \ref{reg transfer}] 
We first recall that
\[\nul (y)=\lambda (f^{-1}(y))\int_{E} \nul(x)e^{-\int_{f(x)}^{y}\lambda (f^{-1}(s))g_{x}(s)ds}g_{x}(y)\nbOne_{\{f(x)\leq y\}}dx.\]
Define 
$$\Lambda_\lambda(x,y) = e^{-\int_{f(x)}^{y}\lambda (f^{-1}(s))g_{x}(s)ds}g_{x}(y).$$
If $\lambda\in{\mathcal F}(\mathfrak{c})$, then $\Lambda_\lambda (x,.) \in {\mathcal H}^s({\mathcal D})$ for every $y \in [0,\infty)$, and we have
$$\|\Lambda_\lambda(x,.)\|_{{\mathcal H}^s({\mathcal D})} \leq \psi_1(\|\lambda\|_{{\mathcal H}^s({\mathcal D})},\|g_x \|_{{\mathcal H}^s({\mathcal D})},\|f^{-1} \|_{{\mathcal H}^s({\mathcal D})} )$$
for some continuous function $\psi_1$. The result is then a consequence of the representation
$\nul (y) = \lambda (f^{-1}(y))\int_0^{f^{-1} (y)} \Lambda_\lambda(x,y)dx$.
\end{proof}
Returning to \eqref{biais mesure invariante} we deduce from Lemma \ref{reg transfer} that $\|\nu_\lambda \|_{{\mathcal H}^s(f(\mathcal{D}))}$ is bounded above by
a constant that depends on  ${\mathcal D}$,$\|g_x \|_{{\mathcal H}^s(f({\mathcal D}))}$, $\|f^{-1} \|_{{\mathcal H}^s(f({\mathcal D}))}$ and $\|\lambda\|_{{\mathcal H}^s(f({\mathcal D}))}$ only. It follows that 
\begin{equation} \label{controle IV}
IV \lesssim h_n^{2s}
\end{equation}
uniformly in $\lambda \in {\mathcal H}^s({\mathcal D},M_1 )$.
\end{proof}
\begin{proof}[The term V] We have
$$\E_{\mu}[V] \leq 
\varpi_n^{-2}|{\mathcal D}| \sup_{y \in {\mathcal D}} \E_{\mu}\big[\big(K_{h_n}\star \widehat \nu_n(f(y))-K_{h_n} \star \nu_\lambda (f(y))\big)^2\big].$$
By \eqref{moment mes inv} of Proposition \ref{moment lemma bis} we derive that
\begin{equation} \label{controle V}
\E_{\mu}[V] \lesssim \varpi_n^{-2}(nh_n)^{-1}
\end{equation}
uniformly in $\lambda  \in {\mathcal F}(\mathfrak{c})$
\end{proof}
\begin{proof}[The term VI] First, thanks to  Lemma \ref{minoration D}, we get that
$$\inf_{ \lambda : \: (\lambda,f, \phi )\in {\mathcal F}(\mathfrak{c},m,M) \:\; \text{and}\:\; \lambda \in  {\mathcal H}^s(f({\mathcal D}), M_{1})}\inf_{y\in f({\mathcal D})}D_n(y)_\varpi D(y) \gtrsim \varpi_n .$$
 Next, 
\begin{align}
\sup_{y \in  f({\mathcal D})} |K_{h_n}\star \nu_\lambda (y)| & = \sup_{y \in  f({\mathcal D})}\big|\int_{[0,\infty)}K_{h_n}(z-y)\nu_\lambda (z)dz\big|  \leq \sup_{y \in {\mathcal D}_1}\nu_\lambda (y)\|K\|_{L^1([0,\infty))}, \label{borne phi l1}
\end{align}
where ${\mathcal D}_1 = \{y+z,\;y\in  f({\mathcal D}),\;z\in \text{supp}(K_{h_n})\} \subset \widetilde {\mathcal D}$, for some compact interval $\widetilde {\mathcal D}$ since $K$ has compact support by Assumption \ref{prop K}. Thanks to (\ref{supmu}), we see that \eqref{borne phi l1} holds uniformly in $\lambda $ such that  $(\lambda,f, \phi )\in {\mathcal F}(\mathfrak{c},m,M)$. We derive that
$$\E_{\mu}\big[VI\big] \lesssim \varpi_n^{-2}\sup_{y \in f({\mathcal D})}\E_{\mu}\big[\big(D_n(y)_{\varpi_n}-D(y)\big)^2\big].$$
Applying \eqref{convergence D_n} of Proposition \ref{vitessed}, we conclude that
\begin{equation} \label{controle VI}
\E_{\mu}\big[VI\big] \lesssim \varpi_n^{-2}n^{-1},
\end{equation}
uniformly in $\lambda $ such that $ (\lambda,f, \phi )\in {\mathcal F}(\mathfrak{c},m,M)$.
\end{proof}
\begin{proof}[End of the proof of Theorem \ref{upper bound}] 
We put together the three estimates \eqref{controle IV}, \eqref{controle V} and \eqref{controle VI} to obtain
\begin{align*}
\E_{\mu}\big[\|\widehat \lambda_n-\lambda\|_{L^2({\mathcal D})}^2\big] \lesssim  h_n^{2s}+\varpi_n^{-2}(nh_n)^{-1}+ \varpi_n^{-2}n^{-1}
\end{align*}
uniformly in $\lambda \in{\mathcal H}^s({\mathcal D},M_1)$ and $ (\lambda,f, \phi )\in {\mathcal F}(\mathfrak{c},m,M)$. The choice $h_n \sim n^{-1/(2s+1)}$  yields the rate $\varpi_n^{-2} n^{-2s/(2s+1)}$.
\end{proof}

\section*{Acknowledgements}
The research of  N. Krell  is partly supported by the Agence Nationale de la Recherche
PIECE 12-JS01-0006-01.


\begin{thebibliography}{9}


\bibitem{AalPhd}
O.~O. Aalen.  (1975)
\newblock {\em Statistical inference for a family of counting processes}.
\newblock ProQuest LLC, Ann Arbor, MI.
\newblock Thesis (Ph.D.)--University of California, Berkeley.

\bibitem{Aal77}
O.~O. Aalen. (1977)
\newblock Weak convergence of stochastic integrals related to counting
  processes.
\newblock {\em Z. Wahrscheinlichkeitstheorie und Verw. Gebiete},
  38(4):261--277.

\bibitem{Aal78}
O.~O. Aalen. (1978)
\newblock Nonparametric inference for a family of counting processes.
\newblock {\em Ann. Statist.}, 6(4):701--726.

\bibitem{andbog}
P.~K. Andersen, {\O}.~Borgan, R.~D. Gill, and N.~Keiding. (1993)
\newblock {\em Statistical models based on counting processes}.
\newblock Springer Series in Statistics. Springer-Verlag, New York.

\bibitem{asm}
S.~Asmussen and H.~Albrecher. ( 2010)
\newblock {\em Ruin probabilities}.
\newblock Advanced Series on Statistical Science \& Applied Probability, 14.
  World Scientific Publishing Co. Pte. Ltd., Hackensack, NJ, second edition.

\bibitem{Az}
R.~Aza{\"{\i}}s. (2014)
\newblock A recursive nonparametric estimator for the transition kernel of a
  piecewise-deterministic Markov process.
\newblock {\em To appear in ESAIM: Probability and Statistics}.

\bibitem{kre}
R.~Aza{\"{\i}}s, J.~B. Bardet, A.~Genadot, N.~Krell, and P.-A. Zitt. (2014)
\newblock Piecewise deterministic Markov process (pdmps). Recent results.
\newblock {\em ESAIM: Proceedings}, Vol. 44, 276-290 

\bibitem{Az12b}
R.~Aza{\"{\i}}s, F.~Dufour, and A.~G{\'e}gout-Petit. (  2013)
\newblock Nonparametric estimation of the jump rate for non-homogeneous marked
  renewal processes.
\newblock {\em Ann. Inst. Henri Poincar\'e Probab. Stat.}, 49(4):1204--1231.

\bibitem{Azaduf2}
R.~Aza{\"{\i}}s, F.~Dufour, and A.~G{\'e}gout-Petit. (2014)
\newblock Nonparametric estimation of the conditional distribution of the
  inter-jumping times for piecewise-deterministic Markov processes.
\newblock {\em To appear in Scandinavian Journal of Statistics}.


\bibitem{AzGe}
R.~Aza{\"{\i}}s, and A.~Genadot. (2014)
\newblock Semi-parametric inference for the absorption
features of a growth-fragmentation model.
\newblock Preprint, available at arXiv:1403.6769v2

\bibitem {BaCh}
J.-B~.Bardet, A.~Christen, A.~Guillin,
 F.~Malrieu  and P.-A.~Zitt, (2013),
  \newblock Total variation estimates for the TCP process.
 \newblock {\em Electron. J. Probab.}, 18: 10--21.
		




\bibitem{B}
P.~H. Baxendale. (2005)
\newblock Renewal theory and computable convergence rates for geometrically
  ergodic {M}arkov chains.
\newblock {\em Ann. Appl. Probab.}, 15(1B):700--738.

\bibitem{ber1}
P.~Bertail, S.~{Cl\'emen{\c c}on}, and J.~Tressou. (2008)
\newblock A storage model with random release rate for modeling exposure to
  food contaminants.
\newblock {\em Math. Biosci. Eng.}, 5(1):35--60.

\bibitem{ber2}
P.~Bertail, S.~{Cl\'emen{\c c}on}, and J.~Tressou. (2010)
\newblock Statistical analysis of a dynamic model for dietary contaminant
  exposure.
\newblock {\em J. Biol. Dyn.}, 4(2):212--234.

\bibitem{bou}
F.~Bouguet. (2013)
\newblock Quantitative speeds of convergence for exposure to food contaminants.
\newblock Preprint, available at http://arxiv.org/abs/1310.3948.

\bibitem{ChMaPa}
D.~Chafaï, F.~Malrieu, and K.~Paroux. (2010)
\newblock On the long time behavior of the TCP window size
process.
\newblock {\em Stochastic Processes and their Applications}, no. 1
20, 1518–1534.

\bibitem{Cloez}
B.~Cloez. (2012)
\newblock Wasserstein decay of one dimensional jump-diffussions.
\newblock Preprint, available at https://hal.archives-ouvertes.fr/file/index/docid/740994/filename/Soumission.pdf

\bibitem{Dav84}
M.~H.~A. Davis. (1984)
\newblock Piecewise-deterministic {M}arkov processes: a general class of
  nondiffusion stochastic models.
\newblock {\em J. Roy. Statist. Soc. Ser. B}, 46(3):353--388.
\newblock With discussion.

\bibitem{Dav93}
M.~H.~A. Davis. (1993)
\newblock {\em Markov models and optimization}, volume~49 of {\em Monographs on
  Statistics and Applied Probability}.
\newblock Chapman \& Hall, London.

\bibitem{hof}
M.~Doumic, M.~Hoffmann, N.~Krell, and L.~Robert. (2014)
\newblock Statistical estimation of a growth-fragmentation model observed on a
  genealogical tree.
\newblock To appear in Bernoulli.

\bibitem{DGR02}
V.~Dumas, F.~Guillemin, and Ph. Robert. (2002)
\newblock A {M}arkovian analysis of additive-increase multiplicative-decrease
  algorithms.
\newblock {\em Adv. in Appl. Probab.}, 34(1):85--111.


\bibitem{GrKa}
I.~Grigorescu and M.~Kang. (2009)
\newblock Reccurence and ergodicity for a continuous aimd model. 
\newblock Preprint, available at http://www.math.miami.edu/$\sim$igrigore/pp/b\_alpha\_0.pdf .

\bibitem{GRZ04}
F.~Guillemin, P.~Robert, and B.~Zwart. (2004)
\newblock A{IMD} algorithms and exponential functionals.
\newblock {\em Ann. Appl. Probab.}, 14(1):90--117.

\bibitem{LaPe}
P.~Laurençot and B.~Perthame. (2009)
\newblock Exponential decay for the growth-fragmentation/cell-division
equation.
\newblock {\em Commun. Math. Sci.},
7, no. 2, 503–510.

\end{thebibliography}
\end{document}